\documentclass[12pt,reqno]{amsart}
\numberwithin{equation}{section}

\newtheorem{innercustomthm}{Theorem}
\newenvironment{customthm}[1]
  {\renewcommand\theinnercustomthm{#1}\innercustomthm\itshape}
  {\endinnercustomthm}

\usepackage{times}
\usepackage{enumerate}
\usepackage{graphicx,adjustbox}
\usepackage{hyperref}
\usepackage{amsmath,amssymb}
\usepackage{mathrsfs}
\usepackage{float}
\usepackage{caption}
\usepackage{subcaption}
\usepackage[margin=1.25 in]{geometry}  
\usepackage{cleveref}

\newtheorem{proposition}{Proposition}[section]
\newtheorem{lemma}[proposition]{Lemma}

\newtheorem{theorem}[proposition]{Theorem}
\newtheorem{definition}[proposition]{Definition}
\newtheorem{remark}[proposition]{Remark}

 \newcommand{\reg}{\mathrm{reg}}
\newcommand{\depth}{\mathrm{depth}}
\renewcommand{\dim}{\mathrm{dim}}
\renewcommand{\l}{\langle}
\renewcommand{\r}{\rangle}
\newcommand{\im}{\mathrm{im}}
\newcommand{\ini}{\mathrm{in}}
\newcommand{\supp}{\mathrm{supp}}
\newcommand{\cone}{\mathrm{cone}}
\newcommand{\iv}{\mathrm{iv}}
\newcommand{\G}{\mathcal{G}}
\newcommand{\C}{\mathcal{C}}
\usepackage{amscd}
\usepackage{graphicx}
\usepackage[all]{xy}
\usepackage{graphicx}
\usepackage{tikz}

\title[Generalized binomial edge ideals of whisker graphs]{Generalized binomial edge ideals of whisker graphs via an extension of generalized corona products}
\author{
J. Anuvinda
\and
Ranjana Mehta
\and
Kamalesh Saha
}
\date{}

\address{Department of Mathematics, SRM University-AP, Amaravati 522240, Andhra Pradesh, India}
\email{\url{anuvinda_j@srmap.edu.in}}

\address{Department of Mathematics, SRM University-AP, Amaravati 522240, Andhra Pradesh, India} 
\email{\url{ranjana.m@srmap.edu.in}}

\address{Department of Mathematics, SRM University-AP, Amaravati 522240, Andhra Pradesh, India}
\email{\url{kamalesh.saha44@gmail.com}; \url{kamalesh.s@srmap.edu.in}}

\subjclass[2020]{Primary: 13F65, 13D02, 13C15; Secondary: 13C14, 05E40}

\keywords{generalized binomial edge ideals, whisker graphs, depth, regularity, Cohen-Macaulay ring}

\allowdisplaybreaks

\begin{document}

\begin{abstract}
In this paper, we initiate a systematic study of generalized binomial edge ideals of whisker graphs by working within a substantially broader class of graphs. We extend the notion of generalized corona products, and through this enlarged framework, investigate fundamental algebraic invariants such as depth, (Castelnuovo–Mumford) regularity, and the Cohen–Macaulay property. In particular, we establish a sharp lower bound on the depth of generalized binomial edge ideals for our extended class, and further obtain explicit depth formula for a broad subclass of this family, which in turn recovers the depth formula for whisker graphs. We also establish sharp upper bounds for the regularity, and in the case of binomial edge ideals of whisker graphs over gap-free graphs, determine the exact value of the regularity. Finally, for our extended class, we provide a combinatorial classification of all Cohen–Macaulay binomial edge ideals, which in turn yields a new construction of Cohen–Macaulay binomial edge ideals.
\end{abstract}

\maketitle

\section{Introduction}\label{sec:intro}

Let $G=(V(G),E(G))$ be a simple graph on the vertex set $V(G)=[n]=\{1,\dots,n\}$. Consider the polynomial ring $R = K[x_1,\dots,x_n,y_1,\dots,y_n]$ over a field $K$. The \emph{binomial edge ideal} of $G$ is the ideal
\[
J_G = \left\langle x_i y_j - x_j y_i \mid \{i,j\}\in E(G) \text{ with } i<j\right\rangle
\subset R.
\]
This class of ideals was introduced independently by Herzog et al. \cite{hhhkr10}, and Ohtani \cite{ohtani11}. Rauh \cite{rauh13} later introduced a natural extension of these ideals, known as the \emph{generalized binomial edge ideals}, which was further generalized by Ene-Herzog-Hibi-Qureshi \cite{ehhq14} as binomial edge ideals of pairs of graphs. Let $m,n$ be positive integers and set $R = K[x_{i,j} \mid i\in[m],\ j\in[n]]$. For simple graphs $G_1$ on $[m]$ and $G_2$ on $[n]$, and edges $e=\{i,j\}\in E(G_1)$, $f=\{k,l\}\in E(G_2)$ with $i<j$ and $k<l$, assign the $2$-minor $p_{(e,f)}=[i\ j\mid k\ l]=x_{ik}x_{jl}-x_{il}x_{jk}$. The \emph{generalized binomial edge ideal} of the pair $(G_1,G_2)$ is
\[
J_{G_1,G_2} = \left\langle p_{(e,f)} \mid e\in E(G_1), f\in E(G_2)\right\rangle\subset R.
\]
When $G_1=K_m$ is a complete graph, $J_{K_m,G_2}$ becomes a radical ideal, called the generalized binomial edge ideal associated to $G_2$, introduced in \cite{rauh13}, and the case $m=2$ recovers the usual binomial edge ideal of $G_2$.\par

Over the past fifteen years, there has been a significant surge of interest in the study of binomial edge ideals as well as the generalized one. In comparison to binomial edge ideals, generalized binomial edge ideals remain relatively less explored. For comprehensive accounts on binomial edge ideals, we refer the reader to the survey articles, books, and theses \cite{das_survey_binom, hho_book_binom, laclair_thesis, madani_survey}. For developments concerning generalized binomial edge ideals see \cite{acr26, ams24, ams25, ci20, kats25, kumar20, rauh13, sz24, sz25}. Two homological invariants that play a central role in our work are depth and Castelnuovo-Mumford regularity (in short, regularity). For a finitely generated graded $R$-module $M$, the \textit{depth} is defined as
\[
\depth(M)= \min\{ i \mid H^i_{\mathfrak m}(M)\neq 0\},
\]
where $H^i_{\mathfrak m}(M)$ denotes the $i^{\text{th}}$ local cohomology module of $M$ with respect to the unique homogeneous maximal ideal $\mathfrak m$ of $R$. The \textit{regularity} of $M$ is defined as
\[
\reg(M)=\max\{ j-i \mid \beta_{i,j}(M)\neq 0\},
\]
where $\beta_{i,j}(M)$ are the graded Betti numbers of $M$. By the Auslander-Buchsbaum formula, knowing the depth is equivalent to knowing the projective dimension. Since projective dimension together with regularity determine the extent of the Betti table, depth and regularity play a crucial role in estimating the size of minimal free resolutions.\par 

This paper deals with the depth, regularity, and Cohen-Macaulay property of (generalized) binomial edge ideals of whisker graphs, considering a much broader class of graphs, arising from an extension of generalized corona products. In \cite{hs25} and \cite{ks-m15}, binomial edge ideals of corona product have been considered with some specific base graph, whereas in this paper we allow the base graph to be arbitrary. Whisker graphs occupy a classical and important place in the literature of monomial edge ideals. The \textit{whisker} graph of $G$, denoted by $W(G)$, is obtained by attaching a new pendant (whisker) vertex to every vertex of $G$. In his foundational paper on Cohen-Macaulay graphs \cite{Vil90}, Villarreal showed that such whisker constructions produce the first natural family of graphs whose (monomial) edge ideals are Cohen-Macaulay, thereby establishing whisker graphs as a fundamental class in the theory of (monomial) edge ideals. Beyond commutative algebra, whisker graphs also play an essential role in graph theory and related areas, which motivates the broader generalizations considered in this paper. \par


Despite their significance in the study of monomial edge ideals, a systematic study of binomial edge ideals of whisker graphs appears to be largely absent in the literature. One of the reasons is structural: a key operation in the study of binomial edge ideals is the vertex operation $G_v$, which replaces a vertex $v$ by making all its neighbors adjacent to each other so that $v$ becomes a free vertex. This operation is particularly useful because it appears in a short exact sequence that relates $J_G$ to binomial edge ideals of graphs with fewer non-free vertices. Unfortunately, the $G_v$-operation does not preserve the class of whisker graphs in general, so the standard reduction techniques based on $G_v$ (and related decompositions) are not compatible with whiskering. This incompatibility explains why methods that work for many other graph classes may fail for whisker graphs in the binomial context. 

In this paper, we address this gap and provide the first systematic study of binomial and generalized binomial edge ideals for whisker graphs by considering a much broader class, which even extends the generalized corona products of graphs. Let $G$ be a graph on $[n]$ and $H_1,\ldots,H_n$ be a sequence of $n$ graphs. Then the generalized corona product of $G$ and $H_1,\ldots,H_n$, denoted by $G\circ (H_1,\ldots,H_n)$, is a graph obtained by attaching the $i^{\text{th}}$ vertex of $G$ to every vertex of $H_i$. If $H_1=H_2=\cdots=H_n=H$, then it gives the usual corona product of two graphs $G$ and $H$, denoted by $G\circ H$. Again, observe that if $H=K_1$ (a singleton vertex), then $G\circ K_1$ is nothing but the whisker graph $W(G)$ of $G$. In the context of generalized binomial edge ideals, we extend all these classes, thereby obtaining a key technique to study them in a broader framework. \par

Let us decompose the vertex set of a graph $G$ into free and non-free vertices such that $V(G)=A_G\sqcup B_G$, where $A_G$ and $B_G$ denote the free and non-free vertices of $G$, respectively. We introduce two new classes of graphs:
$$\G_1= \{W_S(G)\mid B_G\subset S\subset V(G)\}$$
and
$$\G_2=\{G\circ_{S} (H_1,\ldots,H_{\vert S\vert})\mid B_G\subset S\subset V(G)\},$$
  where $W_S(G)$ is obtained by attaching a whisker to each vertex in a chosen set $S$, and $G\circ_{S}(H_1,\ldots,H_{\vert S\vert})$ is obtained by attaching arbitrary graphs $H_j$ to chosen vertices $v_j\in S$. Then one can observe the following containment
  $$\text{Whisker Graphs }\subset \text{ Corona Product }\subset \text{ Generalized Corona Product }\subset\, \G_2$$
  and 
  $$\text{Whisker Graphs }\subset\, \G_1\,\subset \,\G_2.$$

  In this paper, we first investigate the depth of generalized binomial edge ideals of graphs belong to the class $\G_2$. We give a tight lower bound on depth for this class as follows:
  \begin{customthm}{\ref{thm:depth-bound-gen}}
Let $D =G\circ_{S}(H_1,\ldots,H_{\ell})$ be a graph in $\mathcal{G}_{2}$. Then $$ \depth(R/J_{K_{m},D})\geq \sum_{i=1}^{\ell}(f(H_{i})+d(H_{i}))+ p-\ell+(m-1)c(D),$$ 
where $ \ell $ is the cardinality of $S$, $p$ is the number of vertices in $G$, $c(D)$ is the number of connected components of $D$, $f(H_{i})$ denotes the number of free vertices in $H_{i} $ and $ d(H_{i})$ denotes the sum of diameters of connected components of $H_{i}$.
\end{customthm}
\noindent Next, for a large subclass of $\G_2$ (which includes whisker graphs, the generalized corona product of $G$ with the attached graphs having Cohen-Macaulay generalized binomial edge ideals, etc.), we establish an exact formula for the depth. Consider the subclass $ \mathcal{G}' \subset \mathcal{G}_{2}$ by imposing an additional condition on the attached graphs $H_{i}$ such that $\depth(R_i/J_{K_m,H_i})=m+\vert V(H_i)\vert-1$ for each $i$. We establish the following for the class $\G'$.

\begin{customthm}{\ref{thm:depth-equal}}
    Let $D$ be any graph belong to the class $\mathcal{G}'$. Then 
    $$ \depth(R/J_{K_{m},D})= \vert V(D) \vert + (m-1)c(D).$$
    In particular, if $D=G\circ_{S}(H_1,\ldots,H_{\vert S\vert})\in \G_2$ and each $H_i$ is connected with Cohen-Macaulay $J_{H_i}$, then
    $$\depth(R/J_D)=\vert V(D)\vert+c(D).$$
    Hence, $\depth(R/J_{W(G)})=2\vert V(G)\vert+c(G)$ for any graph $G$.
\end{customthm}
Finally, we improve the lower bound of the depth for the binomial edge ideals of graphs in $\G_2$ compared to the bound obtained for the generalized case in \Cref{thm:depth-bound-gen}, as follows:

\begin{customthm}{\ref{thm:depth-bound-binom}}
    Let $D =G\circ_{S}(H_1,\ldots,H_{\ell}) \in \mathcal{G}_{2} $ be a simple graph. Then $$ \depth(R/J_{D}) \geq \sum_{i=1}^{\ell}\depth(R_{i}/J_{H_{i}})+ p-\ell+c(D),$$
where $ \ell$ is the cardinality of $ S $, $p$ is the number of vertices in $G$.
\end{customthm}

In \Cref{sec:reg}, we focus on studying the regularity of binomial edge ideals of graphs belong to $\G_1$. We believe that the same study for the class $\G_2$ is challenging unless we restrict the base graph to be very special. First, we establish a tight upper bound on the regularity using the induced matching number of the base graph, which significantly improves the existing general upper bounds.

\begin{customthm}{\ref{thm:reg-bound-gen}}
   Let $H=W_{S}(G)$ be any graph belong to the class $\G_1$. Then 
$$\reg(R/J_{K_{m},H}) \leq (m-1)( \vert S \vert + \im(G)).$$
In particular, for any graph $G$, we have
$$\reg(R/J_{W(G)})\leq \vert V(G)\vert+\im(G).$$
\end{customthm}
\noindent Here, $\im(G)$ denotes the induced matching number of $G$. This invariant plays a central role in the study of regularity of monomial edge ideals, providing a general lower bound and achieving equality for many graph classes. Our result shows that it is likewise significant in estimating the regularity of generalized binomial edge ideals of whisker graphs. Next, to demonstrate the sharpness of the upper bound, we explicitly compute the regularity of binomial edge ideals of whisker graphs on gap-free graphs by analyzing the initial ideal via the reduced Gr\"obner basis technique.
\begin{customthm}{\ref{thm:reg-bound-gapfree}}
    Let $G$ be a gap-free graph. Then 
    $$\reg(R/J_{W(G)})=\vert V(G)\vert+1.$$
\end{customthm}

The final section of this article is devoted to the Cohen–Macaulay classification of graphs in the class $\G_2$. Although there are numerous works on classifying Cohen–Macaulay binomial edge ideals (see \cite{bms18, bmrs24, ehh11, ks-m15, lmr23, rr14, ss25_closed, ss25_binom-whisker}), a complete classification remains largely open and continues to attract interest, particularly in view of the conjecture proposed in \cite{bms22}. Notably, our classification provides a new construction of Cohen–Macaulay binomial edge ideals.

\begin{customthm}{\ref{thrm:cohen-macly}}
    Let $D=G\circ_{S}(H_1,\ldots,H_{\vert S\vert})$ be any connected graph in $\G_2$ with $G$ non-empty. Then the following are equivalent:
    \begin{enumerate}
        \item $J_{D}$ is Cohen-Macaulay;
        \item $G$ is complete, each $H_i$ is connected with Cohen-Macaulay $J_{H_i}$, and whenever $\vert S\vert=\vert V(G)\vert$ at least one of $H_{i}$'s is complete.
        \end{enumerate}
\end{customthm}
\noindent Since for $m>2$, $J_{K_m,G}$ is Cohen-Macaulay if and only if $G$ is complete \cite[Corollary 4.3]{acr26}, the cases $m\geq 3$ offer no further interest for the classification of the Cohen–Macaulay property.

\section{Preliminaries}

In this section, we discuss some concepts and results that will be required for the subsequent sections. The notion of generalized binomial edge ideals $J_{K_m,G}$ is defined for $m\geq 2$. So, without mentioning we will assume $m\geq 2$ and $G$ to be non-empty.

Consider a simple graph $G$ on $ [n] $. Let $T \subset [n]$. We write $G-T$ to denote the induced subgraph on the vertex set $V(G)\setminus T$. In particular by $G-v$, we denote the induced subgraph $G-\{v\}$ for any $v \in V(G)$. Let $c_{G}(T)$ denote the number of connected components of $G-T$, i.e. $c_{G}(T)=c(G-T)$. A vertex $v$ is called a \textit{cut vertex} of $G$ if the number of connected components of $G$ is less than that of $G-v$. If $v$ is a cut vertex of the induced subgraph $G-(T\setminus \{v\})$ for every $v \in T$, then $T$ is said to be a \textit{cutset} of $G$. We denote by $\C(G)$ the set of all cutsets of $G$, and by definition, we have $\emptyset\in\C(G)$.\par  

For a graph $G$, we write $\tilde{G}$ to represent the complete graph on $V(G)$. Let $G_{1},\ldots,G_{c_{G}(T)}$ be the connected components of $G-T$ for some $T\subset [n]$. Now, for an integer $m\geq 2$ and $T\subset [n]$, let us consider the ideal 
$$P_{T}(K_{m},G)=\left\l \{x_{ij}\mid (i,j)\in [m] \times T\},J_{K_{m},\tilde{G_{1}}},J_{K_{m},\tilde{G_{2}}},\ldots,J_{K_{m},\tilde{G}_{c_{G}(T)}}\right\r$$
in the polynomial ring $R=K[x_{ij}\mid i\in [m],j \in [n]]$.
Then $P_{T}(K_{m},G)$ is a prime ideal containing $J_{K_{m},G}$. From \cite[Corollary 4]{rauh13} it follows that $J_{K_m,G}$ is a radical ideal, and by \cite[Theorem 7]{rauh13}, the minimal prime ideals of $J_{K_{m},G}$ are precisely the prime ideals of the form $P_{T}(K_{m},G)$ such that $T\in \mathcal{C}(G)$. Thus, we have 
$$ J_{K_{m},G} = \bigcap_{T \in \mathcal{C}(G)}P_{T}(K_{m},G).$$

Let $G$ be a graph. The \textit{neighbor} set of a vertex $v\in V(G)$ is the set of vertices of $G$ that are adjacent to $v$ and is denoted by $N_{G}(v)$. We write $N_{G}[v]:=N_{G}(v)\cup\{v\}$. As we mentioned in the introduction, the graph $G_{v}$ has a particular importance in the theory of generalized binomial edge ideals, which explicitly has the following vertex and edge sets:
$$ V(G_{v})=V(G) \text{ and } E(G_{v})=E(G)\cup \{\{u,w\} \mid u, w \in N_{G}(v)\}.$$
A vertex $v \in V (G)$ is said to be a \textit{free} (or \textit{simplicial}) vertex of $G$ if the induced subgraph of $G$ on the vertex set $N_{G}(v)$ is a complete graph; equivalently, $G=G_v$. Otherwise, $v$ is called a \textit{non-free} (or \textit{internal}) vertex. We denote by $f(G)$ and $\iv(G)$ the number of free and non-free vertices of $G$, respectively; in particular, $f(G)=|A_{G}|$ and $\iv(G)=|B_G|$. An induced subgraph of $G$ is called a \textit{clique} if it is complete. Note that if $v$ is a free vertex of $G$, then the induced subgraph of $G$ on the vertex set $N_G(v)$ is a clique.\par

We now recall some fundamental results and facts that play a crucial role in the study of generalized binomial edge ideals.

\begin{theorem}[{\cite[Theorem 3.2] {kumar20}}] \label{thm:decomp}Let $G$ be a finite simple graph and $v$ be a non-free vertex of $G$. Then,$$J_{K_{m},G}=J_{K_{m},G_{v}} \cap \left\l \{x_{iv}\mid i\in [m]\},J_{K_{m},G-v}\right\r.$$ 
\end{theorem}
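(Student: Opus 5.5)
The plan is to compare both sides through the prime decomposition of Rauh recalled above,
$J_{K_m,G}=\bigcap_{T\in\C(G)}P_T(K_m,G)$. Write $Q=\left\l \{x_{iv}\mid i\in[m]\},J_{K_m,G-v}\right\r$. We prove the two inclusions separately. The first is immediate from the generators; for the second it suffices to place the intersection $J_{K_m,G_v}\cap Q$ inside every minimal prime $P_T(K_m,G)$ with $T\in\C(G)$.

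\emph{The inclusion $J_{K_m,G}\subseteq J_{K_m,G_v}\cap Q$.} Since $E(G)\subseteq E(G_v)$, every generator $p_{(e,f)}$ of $J_{K_m,G}$ is a generator of $J_{K_m,G_v}$. Now take an edge $f=\{k,l\}\in E(G)$. If $v\notin f$, then $f\in E(G-v)$, so $p_{(e,f)}\in J_{K_m,G-v}$. If $v\in f$, then each monomial of the $2$-minor $[i\ j\mid k\ l]$ contains a variable $x_{iv}$ or $x_{jv}$, so $p_{(e,f)}\in\l x_{iv}\mid i\in[m]\r$. Hence $J_{K_m,G}\subseteq Q$.

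\emph{The inclusion $J_{K_m,G_v}\cap Q\subseteq J_{K_m,G}$.} Fix $T\in\C(G)$ and write $P_T=P_T(K_m,G)$. We show that $P_T$ contains $Q$ or contains $J_{K_m,G_v}$; then $P_T$ contains the intersection, and intersecting over all $T\in\C(G)$ gives the claim.

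\begin{itemize}
\item[(a)] \emph{Case $v\in T$.} All $x_{iv}$ lie in $P_T$. Let $\{k,l\}$ be an edge of $G-v$. If it meets $T\setminus\{v\}$, the corresponding minors lie in the monomial part of $P_T$. Otherwise $k,l$ lie in $G-T$ and are adjacent, so they lie in the same component $G_r$ of $G-T$. Then the minors lie in $J_{K_m,\tilde G_r}\subseteq P_T$. Thus $Q\subseteq P_T$.
\item[(b)] \emph{Case $v\notin T$.} Generators coming from edges of $G$ already lie in $P_T$, since $J_{K_m,G}\subseteq P_T$. Consider a new edge $\{u,w\}$ of $G_v$ with $u,w\in N_G(v)$. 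If $u$ or $w$ lies in $T$, we are done by the variables in $P_T$. Otherwise $u$, $v$, $w$ all survive in $G-T$, and $u-v-w$ is a path there, so $u,w$ lie in one component $G_r$. The corresponding minors then lie in $J_{K_m,\tilde G_r}\subseteq P_T$. Hence $J_{K_m,G_v}\subseteq P_T$.
\end{itemize}

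Nothing here is genuinely hard; the only point needing care is the bookkeeping in case (a). There we must check that an edge of $G-v$ avoiding $T$ really lies inside a single component of $G-T$ rather than of $G-v$, which holds because such an edge is an edge of $G-T$.

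The hypothesis that $v$ is non-free is not used for the equality itself. It only ensures that the decomposition is nontrivial: if $v$ were free, then $G_v=G$ and the equality would hold trivially.
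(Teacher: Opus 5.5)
Your proof is correct and complete. The first inclusion follows from the generators. For the reverse inclusion you check that each minimal prime $P_T(K_m,G)$ contains $\langle \{x_{iv}\mid i\in[m]\}, J_{K_m,G-v}\rangle$ when $v\in T$ and contains $J_{K_m,G_v}$ when $v\notin T$, and then intersect using Rauh's decomposition; your remark that the non-free hypothesis is not needed for the equality is also right.

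The paper gives no proof of its own here; it only cites Kumar. As far as I recall, your prime-by-prime argument is the standard one used there, which adapts Ohtani's argument for $m=2$.
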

\begin{lemma} [{\cite[Lemma 3.4]{kumar20}}]\label{lem:non-free} Let $G$ be a graph and $v$ be a non-free vertex of $G$. Then $\mathrm{max}\{\iv(G_{v}),\iv(G-v),\iv(G_{v}-v)\} < \iv(G)$.\end{lemma}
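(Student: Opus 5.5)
The plan is to show directly that no free vertex of $G$ becomes non-free in any of the three graphs $G_v$, $G-v$, $G_v-v$, while the non-free vertex $v$ is either made free or removed. Since all three graphs have vertex set contained in $V(G)$, this gives $\iv\le \iv(G)-1$ in each case. The basic fact used throughout is that adding edges keeps a clique a clique, and passing to an induced subgraph of a clique gives a clique.

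\emph{The graph $G_v$.} First, $v$ is free in $G_v$: we have $N_{G_v}(v)=N_G(v)$, and every pair of vertices in $N_G(v)$ is joined in $G_v$ by construction. Next, let $w\neq v$ be free in $G$; there are two cases.
\begin{enumerate}
\item If $w\notin N_G(v)$, no new edge of $G_v$ is incident to $w$. Hence $N_{G_v}(w)=N_G(w)$. This set is a clique in $G$, so it is still a clique in $G_v\supseteq G$.
\item If $w\in N_G(v)$, then freeness of $w$ makes $N_G[w]$ a clique containing $v$, so $N_G(w)\subseteq N_G[v]$. The new neighbours of $w$ lie in $N_G(v)$, so $N_{G_v}(w)\subseteq N_G[v]\setminus\{w\}$. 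This set is a clique in $G_v$: vertices of $N_G(v)$ are pairwise adjacent in $G_v$, and $v$ is adjacent to all of them.
\end{enumerate}
Thus $A_G\cup\{v\}\subseteq A_{G_v}$, and since $v\in B_G$, we get $\iv(G_v)\le \iv(G)-1$.

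\emph{The graph $G-v$.} For a free vertex $w$ of $G$ with $w\neq v$, we have $N_{G-v}(w)=N_G(w)\setminus\{v\}$. This is a subset of a clique, hence a clique, so $w$ stays free. The vertex $v$, which was non-free, is deleted. Therefore $B_{G-v}\subseteq B_G\setminus\{v\}$, i.e. $\iv(G-v)\le\iv(G)-1$.

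\emph{The graph $G_v-v$.} Apply the same deletion argument to $G_v$: every free vertex of $G_v$ other than $v$ remains free in $G_v-v$. Hence $B_{G_v-v}\subseteq B_{G_v}$, and by the first case $\iv(G_v-v)\le\iv(G_v)\le\iv(G)-1$.

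The only step needing care is case (2) for $G_v$, a free neighbour $w$ of $v$ gaining new neighbours. The key observation is that freeness of $w$ forces $N_G(w)\subseteq N_G[v]$, so all of $w$'s neighbours in $G_v$ lie inside the clique created on $N_G[v]$.
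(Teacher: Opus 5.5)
Your proof is correct and complete. Free vertices of $G$ stay free in $G_v$, $G-v$ and $G_v-v$, while the non-free vertex $v$ is either made free or deleted; the one delicate case, a free neighbour $w$ of $v$, is handled correctly via $N_G(w)\subseteq N_G[v]$. The paper gives no proof of its own and simply imports the lemma from \cite{kumar20}. Your direct verification matches the reasoning the paper uses informally in the proof of \Cref{lem:compatible class}, where it notes that deleting a vertex creates no new non-free vertex and that $v_\ell$ becomes free after the $G_v$ operation.
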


\noindent It is easy to observe that 
$$J_{K_{m},G_{v}}+\l\{x_{iv}\mid i\in [m]\}, J_{K_{m},G-v}\r= \l\{x_{iv}\mid i\in [m]\}, J_{K_{m},G_{v}-v}\r.$$
Therefore, using the decomposition given in \Cref{thm:decomp}, we obtain the following short exact sequence
\begin{align*}
    0 \longrightarrow R/J_{K_{m},G} \longrightarrow R/J_{K_{m},G_{v}} &\oplus R/\l\{x_{iv}\mid i\in [m]\}, J_{K_{m},G-v}\r\\ 
&\longrightarrow R/\l\{x_{iv}\mid i\in [m]\}, J_{K_{m},G_{v}-v}\r \longrightarrow 0,
\end{align*}
which is equivalent to
\begin{equation}\label{shrt-exact-1}
  0 \longrightarrow R/J_{K_{m},G} \longrightarrow R/J_{K_{m},G_{v}} \oplus R_{v}/J_{K_{m},G-v} \longrightarrow R_{v}/J_{K_{m},G_{v}-v} \longrightarrow 0,
\end{equation}
where $R_{v}= K[x_{ij}\mid i\in [m],j\in V(G-v)]$.

\begin{remark}\label{rem:depth}{\rm
    Note that $J_{K_m,K_n}$ is the determinantal ideal generated by all $2\times 2$ minors of a generic matrix of order $m\times n$. Thus, by \cite[Theorem 1.10]{bc2003}, we get $\dim(R/J_{K_m,K_n})=n+m-1$. Now, for any connected graph $G$ on $[n]$, we have $\emptyset\in \C(G)$ and $P_{\emptyset}(K_m,G)=J_{K_m,K_n}$. Thus, if $G$ is connected and $J_{K_m,G}$ is Cohen-Macaulay, then $\depth(R/J_{K_m,G})=|V(G)|+m-1$. Since the depth of generalized binomial edge ideals is additive over the disjoint union of graphs, we have $\depth(R/J_{K_m,G})=|V(G)|+(m-1)c(G)$ whenever $J_{K_m,G}$ is Cohen-Macaulay.
    }
\end{remark}

\begin{theorem}[{\cite[Theorem 3.4]{ams24}}]\label{thm: depth gen lower bound}
    Let $G$ be a simple graph. Then we have
    $$\depth(R/J_{K_m,G})\geq f(G)+d(G)+(m-2)c(G).$$
\end{theorem}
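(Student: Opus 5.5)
The plan is induction on the number $\iv(G)$ of non-free vertices. We feed the short exact sequence \eqref{shrt-exact-1} into the depth lemma. Since depth is additive over disjoint unions, and $f$, $d$, $c$ are additive too, we may assume $G$ is connected whenever convenient.

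\emph{Base case} $\iv(G)=0$. Every vertex is free, and a connected graph all of whose vertices are simplicial is complete. So each component of $G$ is a complete graph $K_{n_j}$. By \Cref{rem:depth}, $\depth(R/J_{K_m,G})=|V(G)|+(m-1)c(G)$. On the other hand $f(G)=|V(G)|$ and $d(G)\le c(G)$, since each component has diameter at most $1$. The inequality follows.

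\emph{Inductive step.} Pick a non-free vertex $v$. By \Cref{lem:non-free}, the graphs $G_v$, $G-v$ and $G_v-v$ all have fewer non-free vertices, so the induction hypothesis applies to each. Applying the depth lemma to \eqref{shrt-exact-1} gives
\[
\depth R/J_{K_m,G}\ \ge\ \min\{\depth R/J_{K_m,G_v},\ \depth R_v/J_{K_m,G-v},\ \depth R_v/J_{K_m,G_v-v}+1\}.
\]
Passing from $R$ to $R_v$ is harmless, since depth is computed over the ring in which the variables $x_{iv}$ have been killed. It then suffices to compare the combinatorial quantities of the three graphs with those of $G$.

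\textbf{The graph $G_v$.} Free vertices of $G$ stay free in $G_v$. For a free neighbour $u$ of $v$ this uses $N[u]\subseteq N[v]$, which forces $N_{G_v}[u]=N[v]$, a clique. In addition $v$ itself becomes free, so $f(G_v)\ge f(G)+1$. A shortest path in $G_v$ uses at most one new edge, because $N[v]$ is a clique; hence distances drop by at most $1$ and $d(G_v)\ge d(G)-1$. Finally $c(G_v)=c(G)$. Together these give the required bound for $G_v$.

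\textbf{The graph $G_v-v$.} Here $c(G_v-v)=c(G)$ because $N(v)$ is a clique. The same argument as above gives $f(G_v-v)\ge f(G)$. Removing $v$, after its neighbourhood has been completed, lowers the diameter by at most $1$. So the bound from the induction hypothesis, plus $1$, is at least $f(G)+d(G)+(m-2)c(G)$.

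\textbf{The graph $G-v$ (main obstacle).} We still have $f(G-v)\ge f(G)$. If $v$ is not a cut vertex, I expect the diameter not to decrease, or to be compensated by new free vertices; this needs checking. If $v$ is a cut vertex splitting its component into $k\ge 2$ pieces, a diametral path through $v$ only shows that the pieces' diameters sum to at least $d-2$. This is offset by $(m-2)(k-1)$, which is insufficient when $m=2$.

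My first attempt at repair would be to choose $v$ cleverly, for instance a non-free vertex that is not a cut vertex, or one lying far from the ends of a diametral path. Failing that, I would prove separately that each piece created by removing a cut vertex $v$ contains a vertex that is free in $G-v$ but not in $G$, or gains diameter. That would recover the missing units and close the induction.
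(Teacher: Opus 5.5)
Your framework (induction on $\iv(G)$ through \eqref{shrt-exact-1}), your base case and your $G_v$ branch are fine. The paper quotes this result from \cite{ams24} without proof, so your argument is judged on its own, and its inductive step is not proved. The $G-v$ branch, which carries the real content, is left open, and both repairs you suggest fail.

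Since a non-free $v$ has degree at least $2$, $c(G-v)\ge c(G)$. So what you need is a non-free $v$ with $f(G-v)+d(G-v)\ge f(G)+d(G)$, and an arbitrary choice does not give it.

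\textbf{Non-cut vertex.} Take $K_{2,3}$ with parts $\{c_1,c_2\}$ and $\{a,b,y\}$, and add $v$ adjacent to $a,b$. Then $v$ is non-free and not a cut vertex, $f(G)=0$ and $\mathrm{diam}(G)=d(v,y)=3$. But $G-v=K_{2,3}$ has no free vertex and diameter $2$. So the induction hypothesis gives $m$ where $m+1$ is required.

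\textbf{Cut vertex.} Join the $4$-cycles $a_1a_2a_3a_4$ and $b_1b_2b_3b_4$ by a vertex $v$ adjacent to $a_1,b_1$. Then $f(G)=0$ and $\mathrm{diam}(G)=6$, while $G-v=C_4\sqcup C_4$ has $f=0$ and $d=4$. This gives $2m$ against the required $m+4$, which is insufficient for $m=2,3$. No piece gains a free vertex, so your second repair is false. Your first repair (use a non-cut vertex) is insufficient by the previous example and unavailable for trees.

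\textbf{The missing idea} is to choose $v$ relative to a fixed diametral path $P=u_0,\dots,u_D$. If some non-free vertex lies off $P$, take it. Then $P$ is still a shortest path in $G-v$, so its component has diameter at least $D$. Any other component adds at least $1$ to $d$, and $f(G-v)\ge f(G)$. This handles both examples above: take $c_2$, resp.\ $a_4$ with $P=a_3a_2a_1vb_1b_2b_3$.

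Otherwise every vertex off $P$ is free, and a genuine case analysis along $P$ is required.
\begin{itemize}
\item A non-free endpoint $u_0$ works. Its free neighbour off $P$ either splits off as a new component of $G-u_0$ or lies at distance at least $D$ from $u_D$.
\item If both endpoints are free, then $N[u_0]\setminus\{u_1\}$ is a clique component of $G-u_1$. One must then choose $u_1$ or $u_2$ according to the neighbourhoods of $u_1,u_2$, recovering the last unit from a newly free vertex or a longer path.
\end{itemize}
This choice is delicate. For the path $u_0\cdots u_4$ plus two non-adjacent vertices each joined to $u_2,u_3$, the choice $v=u_1$ fails for $m=2$, whereas $v=u_2$ works.

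A minor point: in the $G_v-v$ branch you need $\mathrm{diam}(G_v-v)\ge\mathrm{diam}(G)-1$, measured against $G$. This holds because $v$ is simplicial in $G_v$ and $d_{G_v}(v,y)=d_G(v,y)$. Read as ``drops by at most $1$'' relative to $G_v$, your sentence would only give $\mathrm{diam}(G)-2$.
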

\begin{theorem}[{\cite[Theorem 4.5]{ams24}}]\label{thm: depth gen upper bound}
Let $ G $ be a connected non-complete graph. If the vertex connectivity of $G$ is $\kappa(G)$, then
$$\depth(R/J_{K_m,G})\leq m+\vert V(G)\vert-\kappa(G).$$
In particular, it follows from \Cref{rem:depth} that for any connected graph $G$,
$$\depth(R/J_{K_m,G})\leq m+\vert V(G)\vert-1.$$
\end{theorem}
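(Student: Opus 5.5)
The approach is to use a connectedness-type upper bound on depth coming from a splitting of the minimal primes, rather than dimensions of individual minimal primes. Comparing with single primes does not work: for the cutset $T$ of a minimal separator, $\dim R/P_T(K_m,G)=(m-1)c_G(T)+|V(G)|-|T|$ with $c_G(T)\geq 2$, which is larger than $m+|V(G)|-\kappa(G)$.

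The tool I will invoke is the following lemma (in the spirit of Hartshorne's connectedness theorem, as used by Banerjee--N\'u\~nez-Betancourt for binomial edge ideals). Let $I$ be a radical ideal whose minimal primes are split into two nonempty families, with intersections $I_A$ and $I_B$. Then
\[
\depth(R/I)\leq \dim R/(I_A+I_B)+1 .
\]
It follows from the Mayer--Vietoris sequence $0\to R/I\to R/I_A\oplus R/I_B\to R/(I_A+I_B)\to 0$ and the resulting local cohomology long exact sequence. I expect this step to be the main obstacle. If $\depth R/I$ exceeded $\dim R/(I_A+I_B)+1$, then the connecting map would force a nonvanishing $H^{i}_{\mathfrak m}(R/I_A)\oplus H^{i}_{\mathfrak m}(R/I_B)$ in too low a degree. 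Making this rigorous requires the standard connectedness argument (punctured spectrum after localizing/completing), and I would cite it rather than reprove it.

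Assume $G$ is connected and non-complete, so $\C(G)$ contains a nonempty set. Apply the lemma with $I=J_{K_m,G}=\bigcap_{T\in\C(G)}P_T(K_m,G)$, $I_A=P_\emptyset(K_m,G)=J_{K_m,\tilde G}$, and $I_B=\bigcap_{\emptyset\neq T\in\C(G)}P_T(K_m,G)$. Then
\[
\sqrt{I_A+I_B}=\bigcap_{\emptyset\neq T\in \C(G)}\sqrt{P_\emptyset+P_T}.
\]
For each nonempty $T$, $P_\emptyset+P_T=\l \{x_{ij}\mid j\in T\}, J_{K_m,K_{|V(G)|-|T|}}\r$, since the complete-graph minors on $V(G)$ together with the variables of $T$ contain all the component minors. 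By \Cref{rem:depth} this ideal has dimension $m+|V(G)|-|T|-1$. Therefore
\[
\dim R/(I_A+I_B)=m+|V(G)|-1-\min\{|T|:\emptyset\neq T\in\C(G)\}.
\]

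It remains to show that this minimum equals $\kappa(G)$. Any nonempty cutset $T$ disconnects $G$, because its last vertex is a cut vertex of $G-(T\setminus\{v\})$; hence $|T|\geq\kappa(G)$. Conversely, take a minimum vertex separator $T$. For each $v\in T$, the graph $G-(T\setminus\{v\})$ is connected by minimality, while removing $v$ disconnects it. So $v$ is a cut vertex there, $T\in\C(G)$, and $|T|=\kappa(G)$. Combining with the lemma gives $\depth(R/J_{K_m,G})\leq m+|V(G)|-\kappa(G)$.

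For the ``in particular'' statement: if $G$ is non-complete, then $\kappa(G)\geq 1$ and the bound follows. If $G$ is complete, $J_{K_m,G}$ is determinantal and Cohen--Macaulay, so \Cref{rem:depth} gives equality $m+|V(G)|-1$.
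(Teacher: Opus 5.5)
Your proof is correct. The paper does not prove the main inequality: it quotes it from \cite{ams24}. The only part derived here is the ``in particular'' clause, and you handle it exactly as intended: $\kappa(G)\geq 1$ for connected non-complete $G$, and equality via \Cref{rem:depth} when $G$ is complete.

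For the main inequality you give a self-contained argument, transporting Banerjee--N\'u\~nez-Betancourt's connectedness argument for $J_G$ to $J_{K_m,G}$. The individual steps check out:
\begin{itemize}
\item splitting $\mathcal{C}(G)$ into $\{\emptyset\}$ and the nonempty cutsets;
\item the identity $P_\emptyset(K_m,G)+P_T(K_m,G)=\langle x_{ij}\mid j\in T\rangle+J_{K_m,K_{V(G)\setminus T}}$, whose quotient has dimension $m+|V(G)|-|T|-1$;
\item the equality $\min\{|T| : \emptyset\neq T\in\mathcal{C}(G)\}=\kappa(G)$, proved in both directions.
\end{itemize}
Compared with the bare citation, this shows the bound follows solely from the minimal-prime structure of $J_{K_m,G}$ plus a connectedness principle.

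You flagged the connectedness lemma as the main obstacle, so one comment. The heuristic through $H^\bullet_{\mathfrak m}$ does not by itself give a contradiction. If $\depth R/I\geq d+2$ with $d=\dim R/(I_A+I_B)$, you only get isomorphisms $H^i_{\mathfrak m}(R/I_A)\oplus H^i_{\mathfrak m}(R/I_B)\cong H^i_{\mathfrak m}(R/(I_A+I_B))$ for $i\leq d$, and these are not contradictory on their own.

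The short proof instead uses local cohomology supported at $\mathfrak a=I_A+I_B$:
\begin{itemize}
\item No minimal prime of $I_A$ or $I_B$ contains $\mathfrak a$, since such a prime would contain a minimal prime of the other family. As $I_A,I_B$ are radical, this gives $H^0_{\mathfrak a}(R/I_A)=H^0_{\mathfrak a}(R/I_B)=0$.
\item The Mayer--Vietoris sequence then injects $H^0_{\mathfrak a}(R/\mathfrak a)=R/\mathfrak a\neq 0$ into $H^1_{\mathfrak a}(R/I)$, so $\mathrm{grade}(\mathfrak a,R/I)\leq 1$.
\item The standard inequality $\depth M\leq \mathrm{grade}(\mathfrak a,M)+\dim M/\mathfrak a M$ then yields $\depth R/I\leq d+1$.
\end{itemize}
Either this argument or a citation of Hartshorne's connectedness theorem, applied after localizing at the homogeneous maximal ideal, closes the step.
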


The following depth and regularity lemmas are well-known in the literature, and are frequently used in this paper.

\begin{lemma}\label{lem:2} Let $M,N,P$ be finitely generated graded modules over $R$, and let \ $0\longrightarrow M \longrightarrow N \longrightarrow P \longrightarrow 0$ \ be a short exact sequence. Then \begin{enumerate}
\item $\depth (N) \geq \mathrm{min} \{\depth  (M), \depth  (P) \}$.
\\The equality holds if $\depth (P) \neq \depth (M)-1$.
\item $\depth  (M) \geq \mathrm{min}\{\depth  (N), \depth  (P) + 1 \}$.
\\The equality holds if $\depth (N) \neq \depth (P)$.
\item $\depth  (P) \geq \mathrm{min}\{\depth  (M)-1, \depth  (N)\}$.
\\The equality holds if $\depth (M) \neq \depth (N)$.
\end{enumerate}
\end{lemma}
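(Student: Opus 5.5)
The three statements follow from the long exact sequence in local cohomology attached to $0\to M\to N\to P\to 0$:
$$\cdots\to H^{i-1}_{\mathfrak m}(N)\to H^{i-1}_{\mathfrak m}(P)\to H^{i}_{\mathfrak m}(M)\to H^{i}_{\mathfrak m}(N)\to H^{i}_{\mathfrak m}(P)\to H^{i+1}_{\mathfrak m}(M)\to\cdots,$$
together with the characterization $\depth(L)=\min\{i\mid H^i_{\mathfrak m}(L)\neq 0\}$ given in the introduction. Each inequality says that a certain cohomology module sits between two modules that both vanish. Each equality statement says that, when the relevant depths are distinct, the first nonvanishing cohomology module cannot be cancelled by a neighbouring term. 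No earlier result of the paper is needed; this is the standard "depth lemma".

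\textbf{The inequalities.} For (1), let $i<\min\{\depth M,\depth P\}$. Then $H^i_{\mathfrak m}(M)=0$ and $H^i_{\mathfrak m}(P)=0$, and exactness of $H^i_{\mathfrak m}(M)\to H^i_{\mathfrak m}(N)\to H^i_{\mathfrak m}(P)$ gives $H^i_{\mathfrak m}(N)=0$. For (2), let $i<\min\{\depth N,\depth P+1\}$. Then $H^{i-1}_{\mathfrak m}(P)=0$ and $H^i_{\mathfrak m}(N)=0$, so $H^i_{\mathfrak m}(M)=0$. For (3), let $i<\min\{\depth M-1,\depth N\}$. Then $H^i_{\mathfrak m}(N)=0$ and $H^{i+1}_{\mathfrak m}(M)=0$, so $H^i_{\mathfrak m}(P)=0$.

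\textbf{The equalities.} In each case set $t$ equal to the minimum in question and show that the target module has $H^t_{\mathfrak m}\neq 0$.
\begin{enumerate}
\item Suppose $\depth P\neq\depth M-1$.
\begin{itemize}
\item If $t=\depth M\le\depth P$, then $H^{t-1}_{\mathfrak m}(P)=0$ because $t-1<\depth P$. Hence $H^t_{\mathfrak m}(M)\neq0$ injects into $H^t_{\mathfrak m}(N)$.
\item If $t=\depth P<\depth M$, then by hypothesis $t\neq\depth M-1$, so $t+1<\depth M$ and $H^{t+1}_{\mathfrak m}(M)=0$. Hence $H^t_{\mathfrak m}(N)\to H^t_{\mathfrak m}(P)$ is surjective onto a nonzero module.
\end{itemize}
\item Suppose $\depth N\neq\depth P$.
\begin{itemize}
\item If $t=\depth N\le\depth P$, the hypothesis forces $t<\depth P$, so $H^t_{\mathfrak m}(P)=0$. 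Hence $H^t_{\mathfrak m}(M)\to H^t_{\mathfrak m}(N)\neq0$ is surjective.
\item If $t=\depth P+1<\depth N$, then $H^{t-1}_{\mathfrak m}(N)=0$. Hence $H^{t-1}_{\mathfrak m}(P)\neq0$ injects into $H^t_{\mathfrak m}(M)$.
\end{itemize}
\item Suppose $\depth M\neq\depth N$.
\begin{itemize}
\item If $t=\depth N\le\depth M-1$, then $t<\depth M$, so $H^t_{\mathfrak m}(M)=0$. Hence $H^t_{\mathfrak m}(N)\neq0$ injects into $H^t_{\mathfrak m}(P)$.
\item If $t=\depth M-1<\depth N$, then $H^{t+1}_{\mathfrak m}(N)=0$. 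Hence $H^t_{\mathfrak m}(P)\to H^{t+1}_{\mathfrak m}(M)\neq0$ is surjective.
\end{itemize}
\end{enumerate}

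The only delicate step is the tie-breaking in the equality cases: one must check that the stated hypothesis is exactly what excludes the ambiguous case. In each case above it does, since the ambiguous configuration is precisely the one ruled out by the hypothesis.
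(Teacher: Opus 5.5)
Your approach through the long exact sequence in local cohomology is the standard proof of this depth lemma. The paper gives no proof of its own and quotes the lemma as well known, so the only question is whether your write-up is complete. The three inequalities are correct, and so are the equality cases (1) and (3).

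\textbf{The gap in (2).} The case split in the equality part of (2) is not exhaustive. Your two bullets cover $\depth(N)\le\depth(P)$ and $\depth(P)+1<\depth(N)$. They omit $\depth(N)=\depth(P)+1$, which the hypothesis $\depth(N)\neq\depth(P)$ permits. In that case $t=\depth(N)=\depth(P)+1$, and your first bullet's argument breaks down, because $H^t_{\mathfrak m}(P)$ need not vanish when $t>\depth(P)$.

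The fix is to state the second bullet as $t=\depth(P)+1\le\depth(N)$, that is, $\depth(P)<\depth(N)$. This is all you need for $H^{t-1}_{\mathfrak m}(N)=H^{\depth(P)}_{\mathfrak m}(N)=0$. The injection $H^{t-1}_{\mathfrak m}(P)\hookrightarrow H^{t}_{\mathfrak m}(M)$ then gives $\depth(M)=\depth(P)+1$ in this boundary case too. Cleanest is to split (2) according to the two strict possibilities the hypothesis leaves open, $\depth(N)<\depth(P)$ and $\depth(N)>\depth(P)$.

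\textbf{A smaller point in (3).} In the second bullet of (3), the vanishing $H^{t+1}_{\mathfrak m}(N)=H^{\depth(M)}_{\mathfrak m}(N)=0$ needs $\depth(M)<\depth(N)$. From $\depth(M)-1<\depth(N)$ you only get $\depth(M)\le\depth(N)$. The strict inequality comes from the hypothesis $\depth(M)\neq\depth(N)$, and you should say so explicitly, since this is exactly where the hypothesis is used.
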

\begin{lemma}\label{lem:4} Let $M,N,P$ be finitely generated graded modules over $R$, and let \ $0\longrightarrow M \longrightarrow N \longrightarrow P \longrightarrow 0$ \ be a short exact sequence. Then \begin{enumerate}
\item $\reg (M) \leq \mathrm{max} \{\reg (N), \reg (P)+1 \}$
\item $\reg (N) \leq \mathrm{max} \{\reg (M), \reg (P)\}$
\item $\reg (P) \leq \mathrm{max} \{\reg (M)-1, \reg (N)\}$
\end{enumerate}
\end{lemma}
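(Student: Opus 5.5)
These three inequalities are standard. I would derive all of them from the long exact sequence of graded $\operatorname{Tor}$ against the residue field $K=R/\mathfrak m$. The only fact needed is the characterization
$$\reg(M)=\max\{j-i \mid \operatorname{Tor}_i^R(M,K)_j\neq 0\},$$
since $\beta_{i,j}(M)=\dim_K \operatorname{Tor}_i^R(M,K)_j$.

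Apply $-\otimes_R K$ to $0\to M\to N\to P\to 0$. The maps are homogeneous of degree $0$, so for each fixed internal degree $j$ we obtain an exact sequence of $K$-vector spaces
$$\cdots\to \operatorname{Tor}_{i+1}(P,K)_j\to \operatorname{Tor}_i(M,K)_j\to \operatorname{Tor}_i(N,K)_j\to \operatorname{Tor}_i(P,K)_j\to \operatorname{Tor}_{i-1}(M,K)_j\to\cdots.$$
Each part then follows from the principle that a nonzero term in an exact sequence forces a neighbouring term to be nonzero.

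For (1), suppose $\operatorname{Tor}_i(M,K)_j\neq0$. Then either $\operatorname{Tor}_{i+1}(P,K)_j\neq 0$ or $\operatorname{Tor}_i(N,K)_j\neq0$.
\begin{itemize}
\item In the first case, $j-(i+1)\le \reg(P)$, so $j-i\le \reg(P)+1$.
\item In the second case, $j-i\le\reg(N)$.
\end{itemize}

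For (2), suppose $\operatorname{Tor}_i(N,K)_j\neq0$. Then $\operatorname{Tor}_i(M,K)_j$ or $\operatorname{Tor}_i(P,K)_j$ is nonzero, so $j-i\le\max\{\reg(M),\reg(P)\}$.

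For (3), suppose $\operatorname{Tor}_i(P,K)_j\neq0$. Then either $\operatorname{Tor}_i(N,K)_j\neq0$, or $\operatorname{Tor}_{i-1}(M,K)_j\neq0$.
\begin{itemize}
\item In the first case, $j-i\le\reg(N)$.
\item In the second case, $j-(i-1)\le \reg(M)$, i.e.\ $j-i\le\reg(M)-1$.
\end{itemize}
Taking the maximum over all $(i,j)$ in each part gives the three stated inequalities.

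No step is a genuine obstacle. The one point to check is that the connecting homomorphism $\operatorname{Tor}_{i+1}(P,K)\to\operatorname{Tor}_i(M,K)$ preserves internal degree, which holds because all maps in the sequence have degree $0$; this is what produces the shifts by $\pm1$ in (1) and (3). Alternatively, one could run the same argument with the long exact sequence of local cohomology $H^i_{\mathfrak m}$, using $\reg(M)=\max\{\operatorname{end}(H^i_{\mathfrak m}(M))+i\}$. The $\operatorname{Tor}$ version is more direct given the definition via Betti numbers.
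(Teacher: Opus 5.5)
Your proof is correct: the long exact sequence of $\mathrm{Tor}^R(-,K)$, graded with degree-preserving connecting maps, yields all three inequalities exactly as you describe, including the $\pm 1$ shifts in (1) and (3). The paper gives no proof of this lemma and simply calls it well known; your argument is the standard one behind that remark.
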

\begin{lemma} \label{lem:6}
    Let $0\longrightarrow M \longrightarrow N \longrightarrow P \longrightarrow 0$ \ be a short exact sequence of modules over $R$. Then $\dim(N)=\mathrm{max}\{ \dim(M), \dim(P) \}$.
\end{lemma}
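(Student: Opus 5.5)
The plan is to reduce the statement to a statement about supports, using the standard fact that for a finitely generated module $L$ over the Noetherian ring $R$ we have $\dim(L)=\dim(R/\mathrm{Ann}_R(L))=\sup\{\dim(R/\mathfrak p)\mid \mathfrak p\in\supp(L)\}$. Here $\supp(L)=V(\mathrm{Ann}_R(L))$ is a closed subset of $\mathrm{Spec}(R)$. Once I show that $\supp(N)=\supp(M)\cup\supp(P)$, the dimension of $N$ is the supremum of $\dim(R/\mathfrak p)$ over a union of two sets. That supremum is the maximum of the suprema over each set, which gives $\dim(N)=\max\{\dim(M),\dim(P)\}$.

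The key step is the equality of supports. I will localize the sequence $0\to M\to N\to P\to 0$ at an arbitrary prime $\mathfrak p$. Since localization is exact, I get a short exact sequence $0\to M_{\mathfrak p}\to N_{\mathfrak p}\to P_{\mathfrak p}\to 0$ of $R_{\mathfrak p}$-modules.
\begin{enumerate}
\item If $N_{\mathfrak p}=0$, then $M_{\mathfrak p}$ (a submodule) and $P_{\mathfrak p}$ (a quotient) both vanish.
\item Conversely, if $M_{\mathfrak p}=P_{\mathfrak p}=0$, then exactness in the middle forces $N_{\mathfrak p}=0$.
\end{enumerate}
Hence $\mathfrak p\in\supp(N)$ if and only if $\mathfrak p\in\supp(M)\cup\supp(P)$.

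The only point that needs care is the identification of Krull dimension with the dimension of the support. This uses finite generation, so that $\supp(L)=V(\mathrm{Ann}(L))$ and $\dim(L)=\dim R/\mathrm{Ann}(L)$. All modules in the statement are finitely generated graded $R$-modules, as in the preceding lemmas, so this holds. I will also note the convention $\dim(0)=-\infty$ (or $-1$), so that the formula remains valid when $M$ or $P$ is zero.

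In the graded setting one could instead argue via Hilbert polynomials, using additivity of Hilbert functions and the fact that $\dim(L)$ is one more than the degree of the Hilbert polynomial of $L$, with leading coefficients positive so there is no cancellation. I expect the support argument to be cleaner and will use it. I do not anticipate any genuine obstacle: the lemma is a routine consequence of the exactness of localization.
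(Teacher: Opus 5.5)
The paper gives no proof of this lemma: it is listed among the ``well-known'' facts and used without argument. Your argument is correct and is the standard one. Exactness of localization gives $\supp(N)=\supp(M)\cup\supp(P)$, and then $\dim(L)=\sup\{\dim(R/\mathfrak p)\mid \mathfrak p\in\supp(L)\}$ turns the union of supports into a maximum of dimensions.

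Your finite-generation remark is also appropriate, since the paper's statement omits that hypothesis. It is needed only if one defines $\dim(L)$ as $\dim(R/\mathrm{Ann}_R(L))$. In the paper's later application, the dimension formula for $\G_2'$, all three terms of the sequence are finitely generated anyway.
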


\section{Depth of generalized binomial edge ideals for $\G_2$}

In this section, we investigate the depth of generalized binomial edge ideals of graphs belong to the class $\G_2$. We give a sharp lower bound on the depth for this class, and establish the equality for a broad subclass of $\G_2$, which includes the class of whisker graphs. Recall the structure of graphs belong to the classes $\G_1$ and $\G_2$ (see \Cref{fig:g1} and \ref{fig:g2}).

\begin{figure}[H]
	\centering
\begin{tikzpicture}[scale=1]

\filldraw[black] (0,0) circle (2pt)node[anchor=north]{$1$};
\filldraw[black] (1,1) circle (2pt)node[anchor=south]{$2$};
\filldraw[black] (1,-1) circle (2pt)node[anchor=north]{$3$};
\filldraw[black] (2,0) circle (2pt)node[anchor=north]{$4$};
\filldraw[black] (-1,-1) circle (2pt)node[anchor=north]{$8$};
\filldraw[black] (3,-1) circle (2pt)node[anchor=north]{$9$};
\filldraw[black] (3,1) circle (2pt)node[anchor=south]{$10$};
\filldraw[black] (4,0) circle (2pt)node[anchor=north]{$5$};
\filldraw[black] (5,1) circle (2pt)node[anchor=south]{$6$};
\filldraw[black] (5,-1) circle (2pt)node[anchor=north]{$7$};

\draw[black] (0,0) -- (1,1) -- (2,0) -- (1,-1) -- cycle;

\draw[black] (4,0) -- (5,1) -- (5,-1) -- cycle;
\draw[black] (0,0) -- (2,0);
\draw[black] (2,0) -- (4,0);
\draw[red] (0,0) -- (-1,-1);
\draw[red] (2,0) -- (3,-1);
\draw[red] (4,0) -- (3,1);
\end{tikzpicture}
\caption{A graph in $\mathcal{G}_{1}$}\label{fig:g1}
\end{figure}
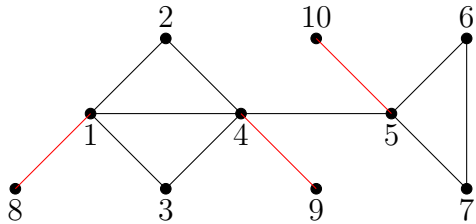

\begin{figure}[H]
	\centering
\begin{tikzpicture}[scale=1]

\filldraw[black] (0,0) circle (2pt)node[anchor=south]{$2$};
\filldraw[black] (1,1) circle (2pt)node[anchor=south]{$1$};
\filldraw[black] (0,-1) circle (2pt)node[anchor=north]{$3$};
\filldraw[black] (2,-1) circle (2pt)node[anchor=west]{$4$};
\filldraw[black] (2,0) circle (2pt)node[anchor=south]{$5$};
\filldraw[black] (-2,0.5) circle (2pt)node[anchor=south]{$6$};
\filldraw[black] (-1,1) circle (2pt)node[anchor=south]{$7$};
\filldraw[black] (3,1.5) circle (2pt)node[anchor=south]{$8$};
\filldraw[black] (4,1.5) circle (2pt)node[anchor=south]{$9$};
\filldraw[black] (4.5,0.5) circle (2pt)node[anchor=north]{$10$};
\filldraw[black] (3.5,0.75) circle (2pt)node[anchor=north]{$11$};
\filldraw[black] (2,-2) circle (2pt)node[anchor=north]{$12$};
\filldraw[black] (3,-2.5) circle (2pt)node[anchor=north]{$13$};
\filldraw[black] (4,-2) circle (2pt)node[anchor=north]{$14$};

\draw[blue] (3,1.5) -- (4,1.5) -- (4.5,0.5) -- (3.5,0.75) -- cycle;
\draw[black] (0,0) -- (1,1) -- (2,0) -- (2,-1) -- (0,-1) -- cycle;
\draw[black] (0,0) -- (2,-1);
\draw[black] (0,0) -- (2,0);
\draw[blue] (-2,0.5) -- (-1,1);
\draw[blue] (2,-2) -- (3,-2.5);
\draw[blue] (3,-2.5) -- (4,-2);
\draw[red] (0,0) -- (-2,0.5);
\draw[red] (0,0) -- (-1,1);
\draw[red] (2,0) -- (3,1.5);
\draw[red] (2,0) -- (4,1.5);
\draw[red] (2,0) -- (4.5,0.5);
\draw[red] (2,0) -- (3.5,0.75);
\draw[red] (2,-1) -- (2,-2);
\draw[red] (2,-1) -- (3,-2.5);
\draw[red] (2,-1) -- (4,-2);

\end{tikzpicture}
\caption{A graph in $\mathcal{G}_{2}$}\label{fig:g2}
\end{figure}

Let us start with the following lemma, which is crucial for the subsequent sections.

\begin{lemma}\label{lem:compatible class}
    Let $D=G\circ_{S}(H_1,\ldots,H_{\vert S\vert})$ be any graph in the class $\G_2$. Then $D-v_i=H_i\sqcup D'$, where $D'\in \G_2$ and $D_{v_i},D_{v_i}-v_i\in \G_2$ for any $v_i\in S$. In particular, if $D\in \G_1$, then $D-v_i=K_1\sqcup D'$, where $D'\in\G_1$ and $D_{v_{i}},D_{v_{i}}-v_i\in \G_1$ for any $v_i\in S$.
\end{lemma}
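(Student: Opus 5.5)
The plan is to unwind the definitions and check membership in $\G_2$ (respectively $\G_1$) directly, exhibiting for each graph a base graph and an admissible set $S'$. Write $D=G\circ_S(H_1,\ldots,H_\ell)$ with $S=\{v_1,\ldots,v_\ell\}$ and $B_G\subset S$. Three facts about $D$ drive everything. First, $N_D(v_i)=N_G(v_i)\cup V(H_i)$. Second, for $v\notin S$ we have $N_D(v)=N_G(v)$. Third, every vertex $h\in V(H_i)$ satisfies $N_D(h)=N_{H_i}(h)\cup\{v_i\}$. Since only $v_i$ joins $H_i$ to the rest of $D$, deleting $v_i$ separates $H_i$, so $D-v_i=H_i\sqcup D'$.

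For $D'$ we take the base graph $G-v_i$ and the set $S'=S\setminus\{v_i\}$, keeping the same attached graphs $H_j$ for $j\neq i$, so that $D'=(G-v_i)\circ_{S'}(H_j)_{j\ne i}$. The only thing to verify is $B_{G-v_i}\subset S'$. If $u$ is free in $G$, then $N_{G-v_i}(u)=N_G(u)\setminus\{v_i\}$ is still a clique, so $u$ stays free in $G-v_i$. Hence $B_{G-v_i}\subset B_G\setminus\{v_i\}\subset S'$. In the $\G_1$ case each $H_j=K_1$, so $H_i=K_1$ and $D'\in\G_1$.

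For $D_{v_i}$, the neighbourhood $N_D(v_i)$ becomes a clique, and the set of new edges consists of three kinds:
\begin{itemize}
\item edges inside $N_G(v_i)$;
\item edges inside $V(H_i)$;
\item edges between $N_G(v_i)$ and $V(H_i)$.
\end{itemize}
The natural base graph is $G^*=G_{v_i}\cup$ (the vertices of $H_i$ with the edges of $\tilde H_i$ and all edges to $N_G[v_i]$). Equivalently, $G^*$ is the induced subgraph of $D_{v_i}$ on $V(G)\cup V(H_i)$. We attach $H_j$ at $v_j$ for $j\neq i$, with $S^*=S\setminus\{v_i\}$. Then $D_{v_i}=G^*\circ_{S^*}(H_j)_{j\ne i}$, because the $H_j$ for $j\neq i$ are still joined only to $v_j$.

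Now we must check $B_{G^*}\subset S^*$, and this is the main obstacle, handled case by case:
\begin{itemize}
\item $v_i$ is free in $G^*$, since $N_{G^*}(v_i)=N_G(v_i)\cup V(H_i)$ is a clique in $G^*$.
\item Each $h\in V(H_i)$ has neighbourhood $N_G[v_i]\cup V(H_i)\setminus\{h\}$, which is a clique. So $h$ is free.
\item For $u\in N_G(v_i)$, the neighbourhood is $N_{G_{v_i}}(u)\cup V(H_i)$. This could fail to be a clique, but only if $u$ is non-free in $G_{v_i}$ or $u$ has a $G$-neighbour outside $N_G[v_i]$. In the latter case, the only issue is whether such a $u$ lies in $S^*$.
\end{itemize}
One shows that any vertex of $V(G)$ free in $G$ remains free in $G^*$ unless it is adjacent to $v_i$. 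If $u\notin S$, then $u$ is free in $G$, so $N_G[u]$ is a clique containing $v_i$. Hence $N_G(u)\subset N_G[v_i]$, and $N_{G^*}(u)=N_G(u)\cup V(H_i)\subset N_{G^*}[v_i]$, which is a clique. So $u$ is free.

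For a vertex $w$ not adjacent to $v_i$, the neighbourhood is unchanged and its edges only increased, so $w$ stays free. Thus $B_{G^*}\subset S^*$.

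If $H_i$ is empty in the degenerate sense, we just use the same argument with $G_{v_i}$ as base. In the $\G_1$ case, $H_i=K_1$, so $G^*$ is $G_{v_i}$ together with a vertex $h$ joined to $N_G[v_i]$, and $D_{v_i}\in\G_1$. Alternatively, one can take the base graph to be $G_{v_i}$ together with $v_i$'s whisker made part of the base. Either way the whiskers at $v_j$ for $j\ne i$ persist.

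Finally, $D_{v_i}-v_i$ is the same construction with $v_i$ removed from $G^*$. The base graph is $G^*-v_i$, and removing a vertex preserves freeness as shown above, so $D_{v_i}-v_i\in\G_2$ (resp. $\G_1$) with the same $S^*$.
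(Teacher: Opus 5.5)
Your proposal is correct and follows the paper's proof. You use the same base graphs: $G-v_i$, the induced subgraph $G^*$ of $D_{v_i}$ on $V(G)\cup V(H_i)$, and $G^*-v_i$, each with $S\setminus\{v_i\}$. Your case check also supplies a detail the paper only asserts, namely that free vertices of $G$ adjacent to $v_i$ stay free in $G^*$.

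One small slip: for such $u$, the neighbourhood is $N_{G^*}(u)=N_{G_{v_i}}(u)\cup V(H_i)$, as you correctly wrote earlier, not $N_G(u)\cup V(H_i)$. It is still contained in the clique $N_{G^*}[v_i]$, so your conclusion stands.
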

\begin{proof}
Let $S=\{v_1,\ldots,v_{\ell}\}$, and without loss of generality, assume that $v_i=v_{\ell}$. \\
\noindent \textbf{Claim-1:} $D-v_{\ell}=H_{\ell}\sqcup D'$, where $D'\in \G_2$.\\
\textit{Proof of Claim-1.} Let us consider the graph $D'=G'\circ_{S\setminus\{v_{\ell}\}}(H_1,\ldots,H_{\ell-1})$, where $G'=G-v_\ell$. Then we have $D-v_{\ell}=H_{\ell}\sqcup D'$. Now it remains to show that $D'\in \G_2$. Since removing a vertex does not produce any new non-free vertex, we have $B_{G'}\subset B_{G}$. Since $B_G\subset S$ and $v_{\ell}\not\in V(G')$, we must have $B_{G'}\subset B_{G}\setminus\{v_{\ell}\}\subset S\setminus \{v_{\ell}\}$. Thus, from our construction of the class $\G_2$, one can see that $D'\in \G_2$.\\
\noindent\textbf{Claim-2:} $D_{v_{\ell}}, D_{v_{\ell}}-v_{\ell}\in \G_2$.\\
\textit{Proof of Claim-2.} Let us consider the induced subgraph $G''=D_{v_{\ell}}[V(G)\cup V(H_{\ell})]$ of $D_{v_{\ell}}$. Then we have $D_{v_{\ell}}=G''\circ_{S\setminus \{v_{\ell}\}}(H_1,\ldots,H_{\ell-1})$. Note that the vertex $v_{\ell}$ and all the vertices of $H_{\ell}$ become free vertices in $G''$. Thus, $B_{G''}\subset B_{G}\setminus \{v_{\ell}\}\subset S\setminus \{v_{\ell}\}$, and hence, $D_{v_{\ell}}\in \G_2$. Similarly, we have $D_{v_{\ell}}-v_{\ell}\in \G_2$ by considering the base graph $G'''=G''-v_{\ell}$.\par 

The second part can be proved by imitating the argument used for the first part.
\end{proof}

Now, let us recall the definition of the diameter of a graph.

\begin{definition}{\rm
    Let $G$ be a connected graph. Let $u,v \in V$ be any two distinct vertices of $G$. Then the \textit{distance} between $u$ and $v$, denoted by $d_{G}(u,v)$, is the length of the shortest path connecting $u$ and $v$. The \textit{diameter} of $G$, denoted by $\mathrm{diam}(G)$, is defined as 
    $$\mathrm{diam}(G):=\mathrm{max}\{d_{G}(u,v)\mid u,v \in V(G)\}.$$ Let $G$ be a graph with connected components $G_1,\ldots,G_{c(G)}$. We define $d(G):= i(G)+\sum_{i=1}^{c(G)}\mathrm{diam}(G_i)$, where $i(G)$ denotes the number of isolated vertices of $G$.
    }
\end{definition} 

Using the invariants $d(G)$ and $f(G)$ of a graph $G$, Malayeri-Madani-Kiani \cite{mmk22} gave a sharp lower bound on the depth of binomial edge ideals, which has further been generalized in \cite{ams24}. In the following theorem, using these two invariants, we establish a better lower bound on the depth of generalized binomial edge ideals for the class $\G_2$.

\begin{theorem}\label{thm:depth-bound-gen}
Let $D =G\circ_{S}(H_1,\ldots,H_{\ell})$ be a graph in $\mathcal{G}_{2}$. Then $$ \depth(R/J_{K_{m},D})\geq \sum_{i=1}^{\ell}(f(H_{i})+d(H_{i}))+ p-\ell+(m-1)c(D),$$ 
where $ \ell $ is the cardinality of $S$ and $p$ is the number of vertices in $G$.
\end{theorem}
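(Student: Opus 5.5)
Let $\Phi(D)$ denote the right-hand side. The argument is by induction on $\ell=|S|$, using the short exact sequence \eqref{shrt-exact-1} at a vertex $v=v_\ell\in S$ and part (2) of \Cref{lem:2}:
\[
\depth(R/J_{K_m,D})\ \ge\ \min\{\depth(R/J_{K_m,D_{v}}),\ \depth(R_v/J_{K_m,D-v}),\ \depth(R_v/J_{K_m,D_v-v})+1\}.
\]
\Cref{lem:compatible class} guarantees that $D_v$, $D_v-v$ and $D'$ (with $D-v=H_\ell\sqcup D'$) all lie in $\G_2$, so the induction can be applied to each of them.

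\textbf{Base case $\ell=0$.} Here $S=\emptyset$, so $B_G=\emptyset$ and every vertex of $G$ is free. Hence each component of $G$ is complete, and $J_{K_m,D}$ is a sum of determinantal ideals. By \Cref{rem:depth},
\[
\depth(R/J_{K_m,D})=p+(m-1)c(D)=\Phi(D).
\]
If $v\in S$ happens to be free in $D$, we cannot apply \eqref{shrt-exact-1} at $v$. Instead we move $v$ out of $S$: we absorb $H_v$ into the base graph, i.e.\ pass to the base graph $D[V(G)\cup V(H_v)]$. I would check that $\Phi$ does not increase under this rewriting. Since $v$ free forces $H_v$ (together with the neighbors of $v$ in $G$) to be a clique, this amounts to $f(H_v)+d(H_v)-1\le |V(H_v)|$, which should be fine. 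So we may assume $v$ is non-free.

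\textbf{The three terms.} For $D-v=H_\ell\sqcup D'$, depth is additive over the disjoint union. By \Cref{thm: depth gen lower bound},
\[
\depth(J_{K_m,H_\ell}\text{-part})\ \ge\ f(H_\ell)+d(H_\ell)+(m-2)c(H_\ell),
\]
and the induction hypothesis handles $D'$. Since $c(D')\ge c(D)-1$ (removing $v$ only splits its component) and $c(H_\ell)\ge1$, the sum is at least
\[
\Phi(D)-(f(H_\ell)+d(H_\ell))+(f(H_\ell)+d(H_\ell))+(m-2)-(m-1)+\ldots .
\]
The bookkeeping must be done carefully: $p$ drops to $p-1$ and $\ell$ to $\ell-1$, so $p-\ell$ is unchanged, and the $(m-1)$ from the lost component of $D$ must be recovered from $c(H_\ell)$ and the components of $D'$. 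For $D_v$, the graph $H_\ell$ merges into the base graph, so the bound loses the term $f(H_\ell)+d(H_\ell)$ but gains $|V(H_\ell)|$ in $p$, with $\ell$ decreasing by one. Since $f(H)+d(H)\le |V(H)|+1$ always holds (diameter of a component is at most its size minus one, and isolated vertices are counted once in each), the net change is at least $0$, while $c(D_v)=c(D)$. The same computation applies to $D_v-v$, whose $p$ is smaller by one; the "$+1$" in the lemma exactly compensates for this.

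\textbf{Main obstacle.} The hardest step is the $D-v$ term. Its components count differently: $c(D-v)=c(H_\ell)+c(D')$ versus $c(D)$, and the bound from \Cref{thm: depth gen lower bound} for $H_\ell$ carries only $(m-2)c(H_\ell)$. I expect the deficit of $1$ per component of $H_\ell$ to be absorbed by the vertex $v$ itself. Removing $v$ lowers $p$ by one but also lowers $\ell$ by one, and the $d(H_\ell)$ term counts diameters per component. So I would write everything out explicitly and verify
\[
(m-2)c(H_\ell)+(m-1)c(D')\ \ge\ (m-1)c(D)-c(H_\ell)\cdot 0 ,
\]
i.e.\ $(m-2)c(H_\ell)+(m-1)c(D')\ge (m-1)c(D)-?$. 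If this inequality is short, I would instead use a sharper estimate of $\depth$ for $H_\ell$ that carries $(m-1)c(H_\ell)$, possibly proved by the same induction with $S$ empty on $H_\ell$. Alternatively, I would choose $v$ to be a non-free vertex in $B_G$ so that $c(D')=c(D)$ gives the needed slack.
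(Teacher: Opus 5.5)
Your outline is the paper's argument: induction on $\ell$, the sequence \eqref{shrt-exact-1} at $v=v_\ell$, \Cref{lem:compatible class}, and the same estimates for $D_v$ and $D_v-v$. However, the step you call the main obstacle is left open, and it cannot be closed in general.

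The paper handles the $D-v$ term by asserting $c(D')\ge c(D)=1$. Then $(m-1)c(D')$ already supplies the required $m-1$, and $(m-2)c(H_\ell)\ge 0$ is discarded. So there is no difficulty as long as $G-v\neq\emptyset$. Your weaker $c(D')\ge c(D)-1$ is the true worst case, though. If the base graph is the single vertex $v\in S$, then $D'=\emptyset$, $D-v=H_\ell$, and the estimate falls short by one. In that case the statement itself is false. For the diamond $K_1\circ_{\{v\}}(P_3)$ the right-hand side is $m+3$, while \Cref{thm: depth gen lower bound} and \Cref{thm: depth gen upper bound} (with $\kappa=2$) give depth exactly $m+2$.

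The standing assumption may exclude $G=K_1$ at the top level, but the inductive hypothesis is applied to such $D'$, so the failure propagates to base graphs with edges. Take $m=2$, $G=K_2$ on $\{u,v\}$, $S=\{u,v\}$, and $H_u,H_v$ the paths $a-b-c$ and $a'-b'-c'$. The right-hand side is $8+0+1=9$. But $T=\{u,v,b,b'\}\in\C(D)$ with $c_D(T)=4$, so $\depth(R/J_D)\le\dim(R/P_T)=8$. This is exactly the cutset used in the proof of \Cref{thrm:cohen-macly}.

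Your two fallbacks do not help:
\begin{enumerate}
\item An estimate for $H_\ell$ carrying $(m-1)c(H_\ell)$ is false. For $H_\ell=P_3$ and $m=2$ it would require depth $5$, but $\depth(R/J_{P_3})=4$.
\item Choosing $v\in B_G$ is impossible when $B_G=\emptyset\neq S$, as in the example above. It also gives no control over the graphs $D'$ to which you apply the induction.
\end{enumerate}

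There is a second gap, in the $D_v$ and $D_v-v$ terms: your claim that $f(H)+d(H)\le |V(H)|+1$ always holds is false for disconnected $H$. The interior vertices of a diametral path are non-free, so each component $C$ satisfies $f(C)+d(C)\le |V(C)|+1$. In general this yields only $f(H)+d(H)\le |V(H)|+c(H)$. The paper takes the same step, justified via \Cref{thm: depth gen upper bound}, which is stated only for connected graphs. The step fails together with the statement. For $D=K_2\circ_{\{v\}}(2K_2)$ (two triangles sharing $v$, plus a pendant vertex $u$ at $v$), the right-hand side is $6+1+(m-1)=m+6$. Yet $\depth(R/J_{K_m,D})\le \dim(R/P_\emptyset(K_m,D))=|V(D)|+m-1=m+5$.

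So neither your argument nor the paper's can establish the theorem as stated. What the same induction does prove is the bound for connected $H_i$ with $(m-1)c(D)$ replaced by $(m-2)c(D)$. In that version the $D-v$ term needs only $c(H_\ell)\ge 1$, and the $D_v$ and $D_v-v$ terms need only the connected-case inequality $f(H_\ell)+d(H_\ell)\le |V(H_\ell)|+1$.
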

\begin{proof} Let $ S=\{v_{1},\ldots,v_{\ell}\}\subset V(G)$ be such that each $v_{i}$ is made adjacent to all vertices in $H_{i}$ to form the graph $D$. It is enough to prove the inequality when $D$ is connected. We proceed by mathematical induction on $\vert S\vert=\ell$. If $S=\emptyset$, i.e., $\ell=0$, then $G$ contains no non-free vertices and no $H_{i}$'s are attached to $G$. In this case, we have $D=G$ is a complete graph on $p$ vertices, and thus, $ \depth(R/J_{K_{m},D})= m+p-1$ by \Cref{rem:depth}. Hence, the result holds for the base case. Now, let us take $\ell \geq 1$ and consider the graphs $D-v_{\ell},D_{v_{\ell}},D_{v_{\ell}}-v_{\ell}$. By \Cref{lem:compatible class}, we have $D-v_{\ell}=H_{\ell}\sqcup D'$ with $D'\in\G_{2}$ and $D_{v_{\ell}},D_{v_{\ell}}-v_{\ell}\in\G_2$. Also, it follows from the proof of \Cref{lem:compatible class} that exactly $(\ell-1)$ graphs $H_1,\ldots,H_{\ell-1}$ are attached with $(\ell-1)$ vertices of the base graphs in $D',D_{v_{\ell}}$ and $D_{v_{\ell}}-v_{\ell}$. Therefore, by the induction hypothesis, the inequality holds true for these graphs. Note that the graphs $D_{v_{\ell}},D_{v_{\ell}}-v_{\ell}$ are connected as we assume $D$ is connected, and $c(D')\geq c(D)=1$. For simplicity, we write $v=v_{\ell}$. Hence, we have
\begin{align*}
    \depth(R_v/J_{K_{m}, D-v}) &\geq  \depth(R_{\ell}/J_{K_{m}, H_{\ell}}) +\sum_{i=1}^{\ell-1}(f(H_{i})+d(H_{i}))\\ & \hspace{4cm}+ (p-1)- (\ell-1)+(m-1)c(D')\\
& \geq f(H_{\ell})+d(H_{\ell})+(m-2)c(H_{\ell})+ \sum_{i=1}^{\ell-1}(f(H_{i})+d(H_{i}))\\
& \hspace{3cm}+ p-\ell+(m-1) \quad (\text{by \Cref{thm: depth gen lower bound}})\\
& \geq \sum_{\substack{i=1}}^{\ell}(f(H_{i})+d(H_{i}))+ p-\ell+(m-1),
\end{align*}
where $R_{v}$ and $R_{\ell}$ are the corresponding polynomial rings of $J_{K_m,D-v}$ and $J_{K_m,H_{\ell}}$ respectively. Again, we have
\begin{align*}
\depth(R/J_{K_{m}, D_{v}}) & \geq \sum_{i=1}^{\ell-1}(f(H_{i})+d(H_{i}))+ (p+\vert V(H_{\ell})\vert)-(\ell-1)+ (m-1).
\end{align*}
Since by \Cref{thm: depth gen lower bound} and \ref{thm: depth gen upper bound}, $f(H_{\ell})+d(H_{\ell})\leq \vert V(H_{\ell})\vert+1$, we have
\begin{align*}
   \depth(R/J_{K_{m}, D_{v}})\geq \sum_{i=1}^{\ell}(f(H_{i})+d(H_{i}))+ p-\ell+ (m-1). 
\end{align*}
Similarly, we get
\begin{align*}
\depth(R_{v}/J_{K_{m},D_{v}-v}) & \geq \sum_{i=1}^{\ell}(f(H_{i})+d(H_{i}))+ p-\ell+ (m-1)-1. 
\end{align*}
Now considering the short exact sequence (\ref{shrt-exact-1}) and using \Cref{lem:2}, we obtain
\begin{align*} \depth (R/J_{K_m,D}) &\geq \mathrm{min}\{\depth(R_{v}/J_{K_{m},D-v}), \depth(R/J_{K_{m},D_{v}}),\\& \hspace{2cm}\depth(R_{v}/J_{K_{m},D_{v}-v})+1 \} \\
 & \geq \sum_{\substack{i=1}}^{\ell}(f(H_{i})+d(H_{i}))+ p-\ell + (m-1).
\end{align*}
This completes the proof.
\end{proof}

We now define a subclass $ \mathcal{G}' \subset \mathcal{G}_{2} $ by imposing an additional condition on the attached graphs $H_{i}$. By \Cref{thm: depth gen upper bound}, any connected graph $H$ satisfies $\depth(R/J_{K_m,H})\leq m+\vert V(H)\vert -1$ and this bound is sharp. For a fixed $m\geq 2$, the class $\G'$ consists of those graphs $D=G\circ_{S}(H_1,\ldots,H_{\vert S\vert})\in \G_{2}$ such that $ \depth(R_{i}/J_{K_{m},H_{i}})=m+\vert V(H_i)\vert - 1$ for each $i$, where $R_i=K[x_{i,j}\mid i\in [m], j\in V(H_i)]$. The following theorem determines the depth formula for the generalized binomial edge ideals of graphs belong to $\G'$, and so for the generalized binomial edge ideals of whisker graphs.

\begin{theorem}\label{thm:depth-equal}
Let $D$ be any graph belong to the class $\mathcal{G}'$. Then 
    $$ \depth(R/J_{K_{m},D})= \vert V(D) \vert + (m-1)c(D).$$
    In particular, if $D=G\circ_{S}(H_1,\ldots,H_{\vert S\vert})\in \G_2$ and each $H_i$ is connected with Cohen-Macaulay $J_{H_i}$, then
    $$\depth(R/J_D)=\vert V(D)\vert+c(D).$$
    Hence, $\depth(R/J_{W(G)})=2\vert V(G)\vert+c(G)$ for any graph $G$.
\end{theorem}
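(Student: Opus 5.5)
We prove the two inequalities separately. Throughout we may assume $D$ is connected, since depth is additive over disjoint unions and each component of a graph in $\G'$ is again of the form $G_j\circ_{S_j}(\ldots)$ with the same hypothesis on the attached graphs.

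\textbf{Upper bound.} Since $D$ is connected, \Cref{thm: depth gen upper bound} gives directly
$$\depth(R/J_{K_m,D})\leq m+\vert V(D)\vert-1=\vert V(D)\vert+(m-1)c(D).$$

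\textbf{Lower bound.} We argue by induction on $\ell=\vert S\vert$, repeating the scheme of the proof of \Cref{thm:depth-bound-gen}. The only change is that we now use the exact value $\depth(R_\ell/J_{K_m,H_\ell})=m+\vert V(H_\ell)\vert-1$, per connected component of $H_\ell$. For the base case $\ell=0$, $D=G$ is complete, and \Cref{rem:depth} gives $\depth(R/J_{K_m,D})=p+m-1$.

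For the induction step, put $v=v_\ell$, which is a non-free vertex of $D$ since it is adjacent to $H_\ell$ and to $G$ (if $v$ is free we can treat this case separately). By \Cref{lem:compatible class}, the graphs $D'$, $D_v$ and $D_v-v$ lie in $\G_2$ with the attached graphs $H_1,\ldots,H_{\ell-1}$ only, so they lie in $\G'$ and the induction hypothesis applies to them. We then compute each term.
\begin{itemize}
\item[(a)] $D-v=H_\ell\sqcup D'$. By additivity, its depth is at least $(\vert V(H_\ell)\vert+(m-1)c(H_\ell))+(\vert V(D')\vert+(m-1)c(D'))\geq \vert V(D)\vert-1+2(m-1)\geq \vert V(D)\vert+m-1$.
\item[(b)] $D_v$ is connected on $V(D)$, so its depth is $\vert V(D)\vert+m-1$.
\item[(c)] $D_v-v$ is connected on $\vert V(D)\vert-1$ vertices, so its depth is $\vert V(D)\vert+m-2$. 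Connectedness holds because $N_D(v)$ becomes a clique containing all of $H_\ell$ and the neighbours of $v$ in $G$.
\end{itemize}
Applying \Cref{lem:2}(2) to the short exact sequence \eqref{shrt-exact-1} yields $\depth(R/J_{K_m,D})\geq \min\{\vert V(D)\vert+m-1,\ (\vert V(D)\vert+m-2)+1\}$, which is the desired lower bound.

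The delicate step is the bookkeeping for $D-v$ when $D'$ has several components or $H_\ell$ is disconnected. In fact the hypothesis for $\G'$ only makes sense with each $H_i$ connected, because for disconnected $H_i$ the value $m+\vert V(H_i)\vert-1$ exceeds the upper bound; so (a) is fine. One must also check that $D_v$ and $D_v-v$ are connected, which follows as noted in (c).

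\textbf{Particular cases.} For $m=2$, connected $H_i$ with $J_{H_i}$ Cohen--Macaulay satisfy $\depth=\vert V(H_i)\vert+1$ by \Cref{rem:depth}, so $D\in\G'$ and the formula reads $\depth(R/J_D)=\vert V(D)\vert+c(D)$. For whisker graphs, $W(G)=G\circ_{V(G)}(K_1,\ldots,K_1)$ lies in $\G'$, and $\vert V(W(G))\vert=2\vert V(G)\vert$ and $c(W(G))=c(G)$ give $\depth(R/J_{W(G)})=2\vert V(G)\vert+c(G)$.
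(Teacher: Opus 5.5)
Your proof is essentially the paper's induction: the same sequence \eqref{shrt-exact-1} at $v=v_\ell$ and the same three estimates. The only difference is that you take the upper bound from \Cref{thm: depth gen upper bound} and use just the inequality in \Cref{lem:2}(2), where the paper uses its equality clause. You also inherit the paper's gap, and it is fatal: the statement is false as written.

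\textbf{Where step (a) fails.} Step (a) needs $c(D')\ge 1$ and the formula for every component of $D'$. If $v$ is isolated in $G$ (e.g.\ $G=K_1$), then $D'$ is the null graph. Step (a) then only gives $\depth(R_v/J_{K_m,D-v})=m+|V(H_\ell)|-1=|V(D)|+m-2$, so \Cref{lem:2}(2) only yields $|V(D)|+m-2$, and this is the true value. Take $D=\cone(v,P_3)=K_1\circ_{\{v\}}(P_3)$.
\begin{itemize}
\item By \Cref{thm: depth gen lower bound} and \Cref{thm: depth gen upper bound}, $\depth(R_1/J_{K_m,P_3})=m+2$, so $D\in\G'$ for every $m$.
\item But $D$ is $K_4$ minus an edge, with $\kappa(D)=2$. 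The theorem you used for the upper bound gives $\depth(R/J_{K_m,D})\le m+2<m+3=|V(D)|+(m-1)c(D)$.
\end{itemize}
In the paper the same step is \eqref{eq3.1}. It uses $c(D')\ge c(D)$ and the formula for $D'$, and breaks in the same way.

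\textbf{Requiring $G$ to have an edge does not help.} The induction passes to $G-v$, and the bad component reappears inside $D'$. Take $m=2$, $G=K_2$ on $\{u,v\}$, $S=\{u,v\}$, and $H_u=H_v=P_3$ with middle vertices $b,b'$.
\begin{itemize}
\item $T=\{u,v,b,b'\}$ is a cutset, and $D-T$ consists of four isolated vertices.
\item Hence $P_T(K_2,D)$ is a minimal, hence associated, prime of $J_D$ with $\dim R/P_T(K_2,D)=8$.
\item Therefore $\depth(R/J_D)\le 8<9=|V(D)|+c(D)$, contradicting even the ``in particular'' clause.
\end{itemize}
This is exactly the cutset used in the proof of \Cref{thrm:cohen-macly}. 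In your step (a) for this $D$ one has $D'=\cone(u,P_3)$, for which the induction hypothesis is false.

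\textbf{What survives.} Suppose every $H_i$ is complete.
\begin{itemize}
\item A vertex $v\in S$ that is free in $D$ is isolated in $G$ and lies in the complete component $\cone(v,H_v)$.
\item A non-free $v\in S$ has a neighbour in $G$, so $c(D')\ge 1$.
\item Every $\cone(u,H_u)$ at an isolated base vertex is complete and satisfies the formula.
\end{itemize}
So your induction goes through in this case, which covers $\depth(R/J_{W(G)})=2|V(G)|+c(G)$. For non-complete $H_i$ an extra hypothesis excluding the configurations above is needed.

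\textbf{Minor point.} Your remark that a disconnected $H_i$ cannot meet the $\G'$ condition is wrong: $C_4\sqcup K_1$ does for $m=2$. In (a), just use the value $m+|V(H_\ell)|-1$ directly.
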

\begin{proof}
The proof uses ideas analogous to those in the previous proof, with necessary modifications. Consider $S=\{v_{1},\ldots,v_{\ell}\}\subset V(G)$, where each $v_{i}$ is adjacent to all vertices in $H_{i}$ forming the graph $D$. We prove the formula using mathematical induction on $\vert S \vert = \ell$. If $S=\emptyset $, then $D=G$ is a complete graph on $p$ vertices. Thus, $ \depth(R/J_{K_{m},D})=m+p-1$ by \Cref{rem:depth}. Hence, the formula holds for the base case. Now let us take $ \ell \geq 1 $ and consider the graphs $ D-v,D_{v},D_{v}-v$, where $v=v_{\ell}$. By \Cref{lem:compatible class}, we have $D-v=H_{\ell}\sqcup D'$  with $D'\in \mathcal{G}'$, and $D_{v},D_{v}-v\in \mathcal{G}'$. Moreover it follows from the proof of \Cref{lem:compatible class} that exactly $(\ell-1)$ graphs $H_1,\ldots,H_{\ell-1}$ are attached with $(\ell-1)$ distinct vertices of the base graphs in $D',D_v,D_{v}-v$. Therefore, by the induction hypothesis, the equality holds for these graphs. Consequently, we obtain 
\begin{align}
\depth(R_{v}/J_{K_{m},D-v}) & = \depth(R_{\ell}/J_{K_{m},H_{\ell}})+\depth(R'/J_{K_m,D'}) \nonumber \\
& =  (m+\vert V(H_{\ell})\vert-1)+|V(D')|+(m-1)c(D') \nonumber \\
& \geq |V(D)|+(m-1)c(D)+(m-2)\quad (\text{since $c(D')\geq c(D)$}) \nonumber \\ 
& \geq |V(D)|+(m-1)c(D),\label{eq3.1}
\end{align}
where $R_{v}$, $R_{\ell}$ and $R'$ are the corresponding polynomial rings of $J_{K_m,D-v}$, $J_{K_m,H_{\ell}}$ and $J_{K_m,D'}$, respectively. Now, from the construction of $D_v$, one can see that $|V(D_v)|=|V(D)|$ and $c(D_{v})=c(D)$. Thus, we have
\begin{align}
\depth(R/J_{K_{m}, D_{v}}) & = |V(D_v)|+(m-1)c(D_v) \nonumber \\
& = |V(D)|+(m-1)c(D) \label{eq3.2}
\end{align}
Similarly, for the graph $D_{v}-v$, we obtain
\begin{align}
\depth(R_{v}/J_{K_{m}, D_{v}-v}) & =  |V(D_{v}-v)|+(m-1)c(D_{v}-v) \nonumber\\
&= |V(D)|+(m-1)c(D)-1\label{eq3.3}
\end{align}
Now, from Equations (\ref{eq3.1}), (\ref{eq3.2}) and (\ref{eq3.3}), we have
\begin{align*}
\depth((R_{v}/J_{K_{m}, D-v}) \oplus (R/J_{ K_{m},D_{v}})) & = \mathrm{min}\{ \depth(R_{v}/J_{K_{m}, D-v}), \depth(R/J_{K_{m}, D_{v}})\} \\
& =  \depth(R/J_{K_{m}, D_{v}}) \\
& \neq  \depth(R_{v}/J_{ K_{m},D_{v}-v})
 \end{align*}
Hence, from the short exact sequence (\ref{shrt-exact-1}) and \Cref{lem:2}(2), we get
 \begin{align*} \depth (R/ J_{K_{m},D}) &  =  \mathrm{min}\{ \depth(R/J_{K_{m},D_{v}}), \depth(R_{v}/J_{K_{m},D_{v}-v})+1 \} \\
 & =|V(D)|+(m-1)c(D).
\end{align*}
The moreover part follows since $|V(W(G))|=2|V(G)|$ and $c(W(G))=c(G)$.
\end{proof}

Next, we improve the lower bound for the depth of binomial edge ideals of graphs in $\G_2$ compared to the bound proved for generalized binomial edge ideals in \Cref{thm:depth-bound-gen}.

\begin{theorem}\label{thm:depth-bound-binom}
Let $D =G\circ_{S}(H_1,\ldots,H_{\ell}) \in \mathcal{G}_{2} $ be a simple graph. Then $$ \depth(R/J_{D}) \geq \sum_{i=1}^{\ell}\depth(R_{i}/J_{H_{i}})+ p-\ell+c(D),$$
where $ \ell$ is the cardinality of $ S $, $p$ is the number of vertices in $G$.
\end{theorem}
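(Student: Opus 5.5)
We argue by induction on $\ell=\vert S\vert$, following the proof of \Cref{thm:depth-bound-gen} but with $m=2$, and we keep $\depth(R_i/J_{H_i})$ as a black box instead of bounding it by $f(H_i)+d(H_i)$. Depth is additive over disjoint unions, and both sides of the inequality are additive over the components of $D$: a component not meeting $S$ is a component of $G$ without non-free vertices, hence complete, and we apply \Cref{rem:depth} to it. So it suffices to treat connected $D$. If $\ell=0$, then $D=G$ is complete on $p$ vertices and $\depth(R/J_D)=p+1$, which is the claimed bound.

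For $\ell\geq 1$ we put $v=v_\ell$ and use the short exact sequence (\ref{shrt-exact-1}) with $m=2$. By \Cref{lem:compatible class}, $D-v=H_\ell\sqcup D'$ with $D'\in\G_2$, and $D_v,D_v-v\in\G_2$. Each of these graphs has exactly $\ell-1$ attached graphs $H_1,\ldots,H_{\ell-1}$, so the induction hypothesis applies to all of them. The three bounds are as follows.
\begin{align*}
\depth(R_v/J_{D-v})&=\depth(R_\ell/J_{H_\ell})+\depth(R'/J_{D'})\geq \sum_{i=1}^{\ell}\depth(R_i/J_{H_i})+(p-1)-(\ell-1)+c(D'),\\
\depth(R/J_{D_v})&\geq \sum_{i=1}^{\ell-1}\depth(R_i/J_{H_i})+(p+\vert V(H_\ell)\vert)-(\ell-1)+1,\\
\depth(R_v/J_{D_v-v})&\geq \sum_{i=1}^{\ell-1}\depth(R_i/J_{H_i})+(p+\vert V(H_\ell)\vert-1)-(\ell-1)+1.
\end{align*}
For the first line, $D'$ has base graph $G-v$ on $p-1$ vertices and $c(D')\geq 1=c(D)$. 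For the second and third lines, the base graphs are $G''=D_v[V(G)\cup V(H_\ell)]$ and $G''-v$, and both $D_v$ and $D_v-v$ are connected.

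It remains to compare the second and third bounds with the target. For this we need
\[
\depth(R_\ell/J_{H_\ell})\leq \vert V(H_\ell)\vert+c(H_\ell).
\]
This follows from \Cref{thm: depth gen upper bound} applied to each component of $H_\ell$, together with additivity. However, it yields only $\vert V(H_\ell)\vert+c(H_\ell)$, not $\vert V(H_\ell)\vert+1$. \textbf{This is the main obstacle.} When $H_\ell$ is disconnected, the $D_v$ and $D_v-v$ terms seem to lose $c(H_\ell)-1$.

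The remedy I would try first is to avoid handling all of $H_\ell$ at once. We can regard $D$ as obtained by attaching the components $H_\ell^{(1)},\ldots,H_\ell^{(c)}$ of $H_\ell$ one at a time, and then induct on the total number of attached connected pieces, or equivalently on $\sum_i c(H_i)$. Combinatorially this means allowing $S$ to be a multiset. \Cref{lem:compatible class} still works in this setting. The only change is that after forming $D_v$ the remaining pieces attached at $v$ become part of the new base graph. This is harmless for the argument, because $v$ and every vertex adjacent to it become free in $G''$, so $B_{G''}$ is still contained in the remaining attachment set.

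With connected pieces the upper bound becomes $\depth\leq\vert V(H_\ell)\vert+1$. Hence the $D_v$ bound is at least the target $T=\sum_{i=1}^{\ell}\depth(R_i/J_{H_i})+p-\ell+1$, and the $D_v-v$ bound is at least $T-1$. Finally, \Cref{lem:2}(2) gives
\[
\depth(R/J_D)\geq\min\{\depth(R/J_{D_v}),\depth(R_v/J_{D-v}),\depth(R_v/J_{D_v-v})+1\}\geq T,
\]
which completes the induction.

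Finally, we must check that the target expression is consistent under this refinement. Splitting $H_\ell$ into its components changes $\sum_i\depth(R_i/J_{H_i})$ additively, since depth is additive over components. The term $-\ell$ becomes $-\sum_i c(H_i)$, which changes the bound. So we should verify that the refined statement implies the stated one. If it does not, we fall back to proving the connected-$H_i$ case and handling disconnected $H_i$ separately.
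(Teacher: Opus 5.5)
You have reproduced the paper's induction almost line by line. The obstacle you isolate is real: at that step the paper cites \Cref{thm: depth gen upper bound} for $H_\ell$, and that result only holds for connected graphs. It cannot be repaired, because the inequality is false for disconnected $H_i$.

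\textbf{Counterexample with disconnected $H_1$.} Take $G=K_2$ on $\{u,v\}$, $S=\{v\}$, and $H_1$ the edgeless graph on two vertices, so $D=K_{1,3}$.
\begin{itemize}
\item $\depth(R_1/J_{H_1})=4$, so the right-hand side is $4+2-1+1=6$.
\item But $\depth(R/J_D)\le \dim R/P_{\emptyset}(K_2,D)=\vert V(D)\vert+1=5$.
\end{itemize}
Your refined inequality, with $-\sum_i c(H_i)$ in place of $-\ell$, is strictly weaker and does not imply the statement. No separate treatment of disconnected $H_i$ can rescue the stated bound.

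\textbf{A second gap, which survives for connected $H_i$.} It is the assertion $c(D')\ge 1$ in your first bound.
\begin{itemize}
\item If $p=1$, then $D'$ is empty and $D-v=H_\ell$. The first bound then gives only $\depth(R_\ell/J_{H_\ell})$, one less than the target.
\item Assuming $p\ge 2$ does not avoid this. The induction hypothesis is applied to $D'$, whose base graph $G-v$ is a single vertex of $S$ whenever $p=2$ and $S=V(G)$.
\item In that situation the claimed inequality already fails. Take the diamond $\cone(w,P)$ with $P$ the path $a_1a_2a_3$. It has cutset $\{w,a_2\}$, so its depth is at most $\dim R/P_{\{w,a_2\}}=2+2=4$. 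The claimed bound is $4+1-1+1=5$.
\end{itemize}
The paper's proof makes the same assertion ``$c(D')\ge c(D)=1$''.

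\textbf{The statement also fails with base graph $K_2$ and connected, even Cohen--Macaulay, $H_i$.} Let $G=K_2$ on $\{v,w\}$, $S=V(G)$, and let $H_1,H_2$ be the paths $a_1a_2a_3$ and $b_1b_2b_3$.
\begin{itemize}
\item $T=\{v,w,a_2,b_2\}\in\C(D)$, and $D-T$ consists of four isolated vertices.
\item Hence $\depth(R/J_D)\le\dim R/P_T(K_2,D)=4+4=8$.
\item The right-hand side is $4+4+2-2+1=9$.
\end{itemize}

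\textbf{What your argument does prove.} Assume all $H_i$ are connected, and replace $c(D)$ by the number of components of $G$ not contained in $S$. Then your induction goes through. In the inductive step, a vertex of $V(G)\setminus S$ stays in the base graph of $D'$, and that is exactly what supplies the $+1$ there. In particular, the stated inequality holds when every $H_i$ is connected and every component of $G$ meets $V(G)\setminus S$.
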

\begin{proof}
Let $ S=\{ v_{1},\ldots,v_{\ell} \} \subset V(G)$ be such that each $ v_{i} $ is adjacent to all vertices in $ H_{i}$ to form $D$. Since the depth of a binomial edge ideal, the quantity $(p-\ell)$, and the number of connected components are additive over the disjoint union of graphs, without loss of generality, we may assume $D$ is connected. We proceed by mathematical induction on $\vert S \vert = \ell$. If $S=\emptyset $, then $B_{G}=\emptyset$ and no $H_{i}$'s are attached to $G$. In this case, $D = G$ is a complete graph with $p$ vertices. Therefore, $\depth(R/J_{D})=p+1$ by \Cref{rem:depth}. Hence, the result is true for the base case. Now, let us assume $\ell \geq 1$ and consider the graphs $D-v,D_{v},D_{v}-v$, where $v=v_{\ell}$. By \Cref{lem:compatible class}, we have $D-v=H_{\ell}\sqcup D'$ with $D'\in\G_{2}$ and $D_{v},D_{v}-v\in \G_2$. Again from the proof of \Cref{lem:compatible class}, it follows that exactly $(\ell-1)$ graphs $H_1,\ldots,H_{\ell-1}$ are attached with the base graphs in $D',D_{v}$ and $D_{v}-v$. Thus, by the induction hypothesis, these graphs follow the inequality. In addition, the graphs $D_{v},D_{v}-v$ are connected as $D$ is connected, and $c(D')\geq c(D)=1$. Hence, we have
\begin{align*}\depth(R_{v}/J_{ D-v}) & = \depth(R_{\ell}/J_{H_{\ell}})+\depth(R'/J_{D'})\\
& \geq \depth(R_{\ell}/J_{H_{\ell}})+\sum_{\substack{i=1}}^{\ell-1}\depth(R_{i}/J_{H_{i}})+ (p-1) \\
&  \hspace{4cm} -(\ell-1)+c(D')\\
& \geq \sum_{\substack{i=1}}^{\ell}\depth(R_{i}/J_{H_{i}})+ p-\ell+1,
\end{align*}
where $R_{v},R_{i}$ and $R'$ are the corresponding polynomial rings of $J_{D-v}$, $J_{H_{i}}$ and $J_{D'}$, respectively. Again, for the graph $D_v$, we get
\begin{align*}
\depth(R/J_{ D_{v}}) & \geq \sum_{\substack{i=1}}^{\ell-1}\depth(R_{i}/J_{H_{i}})+ (p+\vert V(H_{\ell})\vert)-(\ell-1)+1 \\
& =  \sum_{\substack{i=1}}^{\ell-1}\depth(R_{i}/J_{H_{i}})+ p+\vert V(H_{\ell})\vert-\ell+2 \\
& \geq  \sum_{\substack{i=1}}^{\ell}\depth(R_{i}/J_{H_{i}})+ p-\ell+1 \  (\text{From \Cref{thm: depth gen upper bound}})
\end{align*}
In the same way, we have
\begin{align*}
\depth(R_{v}/J_{ D_{v}-v}) & \geq  \sum_{\substack{i=1}}^{\ell-1}\depth(R_{i}/J_{H_{i}})+ (p+\vert V(H_{\ell})\vert-1)-(\ell-1)+1 \\
& =  \sum_{\substack{i=1}}^{\ell-1}\depth(R_{i}/J_{H_{i}})+ p+\vert V(H_{\ell})\vert-\ell+1 \\
& \geq  \sum_{\substack{i=1}}^{\ell}\depth(R_{i}/J_{H_{i}})+ p-\ell \quad  (\text{From \Cref{thm: depth gen upper bound}})
\end{align*}
Now applying \Cref{lem:2} to the short exact sequence (\ref{shrt-exact-1}) and using the above inequalities, we get 
\begin{align*} \depth(R/J_{D}) &  \geq  \mathrm{min}\{ \depth(R_{v}/J_{D-v}), \depth(R/J_{D_{v}}),\\ &  \hspace{2cm}  \depth(R_{v}/J_{D_{v}-v})+1 \} \\
 & \geq  \sum_{\substack{i=1}}^{\ell}\depth(R_{i}/J_{H_{i}})+ p-\ell+1
\end{align*}
Hence, in general, $\depth (R/ J_{D}) \geq  \sum_{i=1}^{\ell}\depth(R_{i}/J_{H_{i}})+ p-\ell+c(D)$.
\end{proof}

\section{Regularity of generalized binomial edge ideals for $\G_1$} \label{sec:reg}

This section investigates the regularity of generalized binomial edge ideals for the class $\G_1$. We establish a sharp upper bound on the regularity in terms of the induced matching number of the base graph, which improves the existing general upper bounds in the case of $\G_1$; consequently, in the case of whisker graphs. Furthermore, we explicitly find the regularity of binomial edge ideals of whisker graphs on gap-free graphs by analyzing the initial ideal via the reduced Gr\"obner basis technique. We begin with the definition of an induced matching.

\begin{definition}{\rm
    A matching in a graph $ G$ is a subgraph consisting of pairwise disjoint edges. A matching is called an \textit{induced matching} if no two of the edges in the matching are joined by an edge in $ G$. The largest size of an induced matching in $ G$ is called the \textit{induced matching number} of $G$, denoted by $\im(G)$.
    }
\end{definition}

Note that a graph $G$ is said to be \textit{gap-free} if $\im(G)=1$. Any co-chordal graph is an example of a gap-free graph.

\begin{theorem}\label{thm:reg-bound-gen}
Let $H=W_{S}(G)$ be any graph belong to the class $\G_1$. Then 
$$\reg(R/J_{K_{m},H}) \leq (m-1)( \vert S \vert + \im(G)).$$
In particular, for any graph $G$, we have
$$\reg(R/J_{W(G)})\leq \vert V(G)\vert+\im(G).$$
\end{theorem}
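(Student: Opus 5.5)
Induction on $|S|$ (or on $\iv(H)$) through the short exact sequence (\ref{shrt-exact-1}), using that $\G_1$ is closed under the three operations by \Cref{lem:compatible class}. \Cref{lem:4}(1) applied to (\ref{shrt-exact-1}) gives
$$\reg(R/J_{K_m,H})\le \max\{\reg(R/J_{K_m,H_v}),\ \reg(R_v/J_{K_m,H-v}),\ \reg(R_v/J_{K_m,H_v-v})+1\}.$$
Here $v\in S$ is a vertex of $G$ carrying a whisker $w$, and we may assume $v$ is non-free in $H$. We need the base case, an identification of the base graphs of the three smaller graphs, and control of how $|S|$ and $\im$ change.

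\textbf{Base case.} If $S=\emptyset$, then $B_G=\emptyset$, so each component of $G$ is complete and $H=G$. Since $\reg$ is additive over disjoint unions and $\reg(R/J_{K_m,K_n})=\min\{m,n\}-1\le m-1$ (determinantal ideal of a generic matrix), we get $\reg\le (m-1)\cdot(\text{number of non-trivial complete components})=(m-1)\im(G)$.

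\textbf{The three graphs.}
\begin{itemize}
\item $H-v=K_1\sqcup W_{S\setminus\{v\}}(G-v)$. The isolated whisker vertex contributes $0$ to regularity. Since $\im(G-v)\le \im(G)$, induction gives the bound $(m-1)(|S|-1+\im(G))$.
\item $H_v$. Its base graph is $G''$, which is $G$ with $N_G[v]\cup\{w\}$ made into a clique, and the whisker set is $S\setminus\{v\}$. The danger is that $\im(G'')$ may exceed $\im(G)$; I will show it cannot. In an induced matching of $G''$, at most one edge meets the clique $C=N_G[v]\cup\{w\}$. Any edge of $G''$ not meeting $C$ is an edge of $G$. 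If some matching edge $e$ meets $C$, replace it by an edge of $G$ at $v$: for instance $\{v,x\}$ with $x\in C$ an endpoint of $e$, or any edge at $v$ if $e$ uses $w$. Since the other edges avoid $C$, this gives an induced matching of $G$ of the same size, so $\im(G'')\le\im(G)$. Induction then gives $\reg(R/J_{K_m,H_v})\le (m-1)(|S|-1+\im(G))$.
\item $H_v-v$. The same argument with base graph $G''-v$ gives $\reg(R_v/J_{K_m,H_v-v})+1\le (m-1)(|S|-1+\im(G))+1$.
\end{itemize}

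Since $m\ge 2$, all three quantities are at most $(m-1)(|S|+\im(G))$, which closes the induction. One subtlety in the induction is that $w$ and the vertices of $N_G(v)$ now lie in $G''$ and become free, so we must check $B_{G''}\subset S\setminus\{v\}$; this is exactly what \Cref{lem:compatible class} provides. The other subtlety is to induct on $|S|$ while letting $G$ vary.

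\textbf{Main obstacle.} The delicate step is the induced matching comparison $\im(G'')\le\im(G)$ and $\im(G''-v)\le \im(G)$, where the new clique contains $w$, which is not a vertex of $G$. I would handle it by the replacement argument above: choose an edge of $G$ incident to $v$ (one exists since $v$ is non-free in $G$, or else $v$ is adjacent to some vertex of $G$ when $v$ is non-free only because of its whisker). Edges avoiding the closed clique cannot be adjacent to $v$'s edge in $G$, since $N_G(v)\subset C$.

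\textbf{Particular case.} For $m=2$ and $S=V(G)$, the bound reads $\reg(R/J_{W(G)})\le |V(G)|+\im(G)$.
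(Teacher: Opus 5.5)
Your overall strategy is the paper's: induction on $|S|$ via the sequence (\ref{shrt-exact-1}), with base graphs $G-v$, $G''$ and $G''-v\cong G_v$. However, the inequality you single out as the main step, $\im(G'')\le \im(G)$, is false.

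Your replacement argument breaks when the matching edge meeting the clique $C=N_G[v]\cup\{w\}$ is the new edge $\{v,w\}$. The remaining matching edges are then only known to avoid $N_G[v]$. That says nothing about whether they are adjacent to a neighbour $x$ of $v$, so substituting an edge $\{v,x\}$ of $G$ need not give an induced matching. Your sentence ``edges avoiding the closed clique cannot be adjacent to $v$'s edge in $G$'' controls only the endpoint $v$, not $x$.

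Concretely, let $G$ be the path $v$--$x$--$a$--$b$. Then $B_G=\{x,a\}$; take $S=V(G)$, say. Now $G''$ has edges $vx, xa, ab, vw, xw$. The set $\{\{v,w\},\{a,b\}\}$ is an induced matching of $G''$, so $\im(G'')=2>1=\im(G)$. No choice of replacement edge can rescue the claim here.

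The repair is what the paper does. Drop the (at most one) matching edge meeting $C$. What remains are edges of $G$ that are pairwise non-adjacent in $G''\supseteq G$, which gives $\im(G'')\le\im(G)+1$. This weaker bound still closes the induction, because in \Cref{lem:4}(1) the term $\reg(R/J_{K_m,H_v})$ enters without a $+1$. Induction then gives $\reg(R/J_{K_m,H_v})\le (m-1)(|S|-1+\im(G)+1)=(m-1)(|S|+\im(G))$, which is exactly the target.

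The $+1$ falls only on the $H_v-v$ term. There your bound $\im(G''-v)\le\im(G)$ is correct: since $v$ is deleted, the only clique edges at $w$ have the form $\{w,x\}$ with $x\in N_G(v)$. You can replace such an edge by $\{v,x\}$ with the same $x$, and the other matching edges are non-adjacent to $x$ by hypothesis. So the theorem survives, but the step as you stated it fails.

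A minor point: your ``we may assume $v$ is non-free in $H$'' needs a justification. A vertex $v\in S$ is free in $H$ exactly when $v$ is isolated in $G$. The paper sidesteps this by first reducing to connected $H$ using additivity of regularity and of $\im$.
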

\begin{proof}
Since the regularity of generalized binomial edge ideals and induced matching numbers of graphs are additive over the disjoint union of graphs, we may assume $H$ to be connected. We prove by mathematical induction on the cardinality of $S$. If $ \vert S \vert=0$, then $B_{G}=\emptyset$, which implies $H=G$ is a complete graph. Hence, by \cite[Proposition 3.3]{kumar20}, $\reg (R/J_{K_{m},H})=\mathrm{min} \{ m-1, |V(H)|-1 \}$. Thus, the result holds for the base case. Now, suppose $\vert S\vert\geq 1$. Choose a vertex $v \in S$. Then by \Cref{lem:compatible class}, $H-v=K_1\sqcup H'$ with $H'\in \G_1$ and $H_{v}, H_{v}-v \in\G_1$. Note that $H',H_v,H_{v}-v$ have exactly $(\vert S\vert -1)$ whiskers attached with the base graphs $G',G'',G'''$, respectively. Hence, we can apply the induction hypothesis on these graphs. Consequently, we get 
\begin{align}
\reg(R_v/J_{K_m,H-v}) &=\reg(R_v/J_{K_m,H'}) \nonumber \\
& \leq (m-1)(\vert S \vert-1 + \im(G'));\label{eq1}\\
\reg(R/J_{K_m,H_{v}}) &\leq (m-1)(\vert S \vert-1 + \im(G''));\label{eq2} \\
\reg(R_v/J_{K_m,H_{v}-v}) & \leq (m-1)(\vert S \vert-1 + \im(G''')).\label{eq3}
\end{align}
Now, we want to compare $\im(G'),\im(G''),\im(G''')$ with $\im(G)$.
Since $G'=G-v$ is an induced subgraph of $G$, we have $\im(G') \leq \im(G)$. Now, observe that if $\{u,v\}$ is the whisker attach to $v$ in $H$, then $G''$ is the graph with 
$$V(G'')=V(G)\cup\{u\} \text{ and } E(G'')=E(G_v)\cup\{\{u,w\}:w\in N_G[v]\}.$$
Now, it is easy to see that $G'''=G''-v\cong G_v$. Suppose $M=\{ e_1,\ldots,e_r\} $ form an induced matching in $G''$ with $r=\im(G'')$. If $ e_i \in E(G)$ for all $i$, then $M$ is also an induced matching in $G$ as $E(G)\subset E(G'')$. Now, let $e_i \notin E(G)$ for some $ i$. Since the neighborhood of $v$ forms a clique in $G''$, that clique will contribute a maximum of one edge to the induced matching $M$. Thus, we should have $M\setminus \{ e_i \} \subset E(G)$. This case, together with the earlier case, implies $\im(G'') \leq \im(G)+1$. Next, suppose $ \im(G''') > \im(G) $. Let $T$ denote a maximum induced matching of $ G'''\cong G_v$. Then $T$ contains exactly one new edge that is not in $G$ and no other edges in $M$ will contain vertices from $N_{G}[v]$ as the new edges of $G_v$ belong to a clique in $G_v$ formed by the vertices $N_{G}[v]$. If the new edge is $\{a,b\}$, then we have $\{a,b\}\subset N_{G}(v)$. In $G_v$, we have $N_{G_v}(v)\subset N_{G_v}(w)$ for any $w\in N_{G_v}(v)$. Then it is easy to see that $(T \setminus \{\{a,b\}\}) \cup \{\{a,v\}\}$ forms an induced matching in $G$, which is a contradiction to the fact that $\im(G''') > \im(G)$. Consequently, $ \im(G''')=\im(G_v) \leq \im(G) $. Finally, we get the following relations:
\begin{align}
    \im(G')&\leq \im(G);\label{eq4}\\
    \im(G'')&\leq \im(G)+1;\label{eq5}\\
    \im(G''')&\leq \im(G).\label{eq6}
\end{align}
Now, using \Cref{eq1} to (\ref{eq6}), we have
\begin{align*}
    \reg(R_{v}/J_{K_{m}, H-v}) & \leq  (m-1)(\vert S \vert -1+ \im(G'))\\
& \leq (m-1)(\vert S \vert + \im(G))-(m-1)\\
 & < (m-1)(\vert S \vert + \im(G));\\
\reg(R /J_{K_{m}, H_{v}})
& \leq (m-1)(\vert S \vert -1+ \im(G''))\\
& \leq  (m-1)(\vert S \vert + \im(G));\\
\reg(R_{v}/J_{K_{m}, H_{v}-v}) & \leq (m-1)(\vert S \vert -1+ \im(G'''))\\
& \leq (m-1)(\vert S \vert + \im(G))-(m-1)\\
 & < (m-1)(\vert S \vert+ \im(G)).
\end{align*}
Now applying \Cref{lem:4} to the short exact sequence (\ref{shrt-exact-1}) and using the above inequalities, we obtain
\begin{align*}
\reg(R/J_{K_{m},H})& \leq  \mathrm{max }\{ \reg(R/J_{K_{m},H_{v}}), \reg(R_{v}/J_{K_{m},H-v}), \reg(R_{v}/J_{K_{m},H_{v}-v})+1\}\\
& \leq (m-1)(\vert S \vert + \im(G)). 
\end{align*}
This completes the proof.
\end{proof}

\begin{remark}{\rm
    Let $H=W_{S}(G)\in\G_1$ be a graph on $n$ vertices. Then observe that due to \cite[Theorem 3.7]{kumar20}, $\reg(R/J_{K_{m},H})=n-1$ if $m\geq n$. Again, by \Cref{thm:reg-bound-gen} and \cite[Theorem 3.6]{kumar20}, we have $\reg(R/J_{K_{m},H}) \leq \min\{(m-1)(\vert S \vert+ \im(G)),n-1\}$ if $m<n$.
    }
\end{remark}

Next, to establish the regularity formula of binomial edge ideals for the whisker graphs on gap-free graphs, we need some preliminaries on edge ideals of hypergraphs and the reduced Gr\"obner basis of binomial edge ideals. Let us illustrate those first.

\begin{definition}{\rm A (simple) \textit{hypergraph} $\mathcal{H}$ is a pair $(V(\mathcal{H}),E(\mathcal{H}))$, where $V(\mathcal{H})$ is a finite set, called the set of vertices of $\mathcal{H}$, and $E(\mathcal{H})$ is a collection of non-empty subsets of $V(\mathcal{H})$, called the edge set, such that no two distinct edges in $E(\mathcal{H})$ contain each other.
}    
\end{definition}

Consider a simple hypergraph $\mathcal{H}$ with vertex set $V(\mathcal{H})=\{x_1,\ldots,x_n\}$. For any $E \subset V(\mathcal{H})$, define $X_E:= \Pi_{x_{i} \in E} x_{i}$ a square-free monomial in $A=K[x_1,\ldots,x_n]$. Then the \textit{edge ideal} $I_{\mathcal{H}}$ of $\mathcal{H}$ is an ideal in $A$ defined as $I_{\mathcal{H}}=\langle X_E : E \subset V(\mathcal{H}) \rangle $.

\begin{definition}{\rm
 An \textit{induced matching} in a simple hypergraph $\mathcal{H}$, is a set of pairwise disjoint edges $e_1,\ldots,e_r$ such that the only edges of $\mathcal{H}$ contained in $\cup_{i=1}^{r}e_i$ are $e_1,\ldots,e_r$.
}
\end{definition}

\begin{theorem}[{\cite[Corollary 3.9]{mv12}}] \label{thrm:9} Let $\mathcal{H}$ be a simple hypergraph and $M=\{e_1,\ldots,e_r\}$ be an induced matching in $\mathcal{H}$. Then $\sum_{i=1}^{r}(\vert e_i \vert-1)\leq \reg(A/I_{\mathcal{H}})$.
\end{theorem}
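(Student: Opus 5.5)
The plan is to reduce to the induced subhypergraph on the vertices covered by the matching. That subhypergraph turns out to be a disjoint union of single edges, whose edge ideal is a monomial complete intersection with easily computed regularity.

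Set $U=\bigcup_{i=1}^{r}e_i\subset V(\mathcal{H})$. Let $\mathcal{H}_U$ be the induced subhypergraph on $U$, i.e. the edges of $\mathcal{H}$ contained in $U$, and let $A_U=K[x_j\mid x_j\in U]$.

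\textbf{Step 1 (restriction does not increase regularity).} Since $I_{\mathcal{H}}$ is a squarefree monomial ideal, its minimal free resolution is $\mathbb{Z}^n$-graded. The squarefree multidegrees $\beta_{i,\mathbf{a}}(A/I_{\mathcal{H}})$ with $\supp(\mathbf{a})\subset U$ coincide with $\beta_{i,\mathbf{a}}(A_U/I_{\mathcal{H}_U})$. This follows from Hochster's formula: $\beta_{i,W}$ depends only on the restriction of the Stanley--Reisner complex to $W$. Alternatively, restrict the Taylor/minimal resolution to the multidegrees supported in $U$. Here one uses that $I_{\mathcal{H}_U}A$ is generated by the generators of $I_{\mathcal{H}}$ whose support lies in $U$. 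Hence every nonzero Betti number of $A_U/I_{\mathcal{H}_U}$ is a nonzero Betti number of $A/I_{\mathcal{H}}$ in the same homological and total degree. This gives
\[
\reg(A_U/I_{\mathcal{H}_U})\le \reg(A/I_{\mathcal{H}}).
\]
I expect this step to be the only place needing care. The cleanest route is to cite the standard restriction lemma for multigraded Betti numbers of monomial ideals rather than reprove Hochster's formula.

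\textbf{Step 2 (identify $\mathcal{H}_U$).} By the definition of an induced matching, the only edges of $\mathcal{H}$ inside $U$ are $e_1,\ldots,e_r$. Therefore $I_{\mathcal{H}_U}=\langle X_{e_1},\ldots,X_{e_r}\rangle$.

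\textbf{Step 3 (compute).} The $e_i$ are pairwise disjoint, so the monomials $X_{e_i}$ involve disjoint sets of variables and form a regular sequence of degrees $|e_i|$. The Koszul complex is therefore a minimal resolution. Its top term sits in homological degree $r$ and internal degree $\sum_i|e_i|$, and the maximum of $j-i$ is attained there. Equivalently, $A_U/I_{\mathcal{H}_U}\cong\bigotimes_i K[x_j\mid x_j\in e_i]/\langle X_{e_i}\rangle$ and regularity is additive over tensor products. Either way,
\[
\reg(A_U/I_{\mathcal{H}_U})=\sum_{i=1}^{r}(|e_i|-1).
\]

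Combining Steps 1--3 yields $\sum_{i=1}^{r}(|e_i|-1)\le\reg(A/I_{\mathcal{H}})$.
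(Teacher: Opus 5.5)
Your proof is correct. The paper gives no proof of its own and only cites \cite[Corollary 3.9]{mv12}, and your argument is essentially the standard one behind that result. You restrict to the induced subhypergraph on $\bigcup_i e_i$, whose Betti numbers appear among those of $A/I_{\mathcal{H}}$ by Hochster's formula or the restriction lemma. You note that this subhypergraph consists exactly of the disjoint edges $e_1,\ldots,e_r$. Finally you read off $\sum_i(|e_i|-1)$ as the regularity of the resulting monomial complete intersection via the Koszul complex.
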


\begin{definition}{\rm
   Let $G$ be a graph with $V(G)=[n]$. A path $ \pi : i = i_{0},i_{1},\ldots,i_{r} = j $ from $i$ to $j$ with $i < j$ in $G$ is said to be an \textit{admissible path} in $ G $ if the following hold:
\begin{itemize}
    \item $i_k \neq i_l $ for $k \neq l$;
    \item For each $k \in \{1,\ldots,r-1\}$, either $i_k < i$ or $i_k > j$;
    \item The induced subgraph of G on the vertex set $\{i_0,\ldots,i_r\}$ has no induced cycle.
\end{itemize}
}
\end{definition}

\begin{remark}\label{rem:grob}{\rm
Let $G$ be a graph on $[n]$. Corresponding to an admissible path $\pi: i=i_{0},i_{1},\ldots,i_{r}=j$ in $G$, we associate the monomial
$$ u_{\pi}=\bigg(\prod_{i_{k}>j} x_{i_{k}}\bigg)\bigg(\prod_{i_{l}<i} y_{i_{l}}\bigg).$$
Let $\prec$ be the lexicographic order on $R=K[x_1,\ldots,x_n,y_1,\ldots,y_n]$ induced by $x_1\succ x_2\succ \cdots \succ x_n \succ y_1\succ y_2\succ \cdots\succ y_n$. Then $\mathcal{G}=\{u_{\pi}f_{ij}\mid \pi \text{ is an admissible path from } i \text{ to } j\}$ is a reduced Gr\"{o}bner basis of $J_{G}$ with respect to $\prec$ by \cite[Theorem 2.1]{hhhkr10}. Therefore, 
$$\mathscr{G}(\mathrm{in}_{\prec}(J_{G}))=\{u_{\pi}x_{i}y_{j}\mid \pi\,\, \text{is an admissible path from}\,\, i\,\, \text{to}\,\, j\,\, \text{with}\,\, i<j\},$$
where $\mathscr{G}(I)$ denotes the minimal monomial generating set of a monomial ideal $I$.
}
\end{remark}

\begin{lemma}\label{reg-lowerbnd}
Let $G$ be a non-empty graph on $p$ vertices. Then $\reg(R/J_{W(G)})\geq p+1$.
\end{lemma}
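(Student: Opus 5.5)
The plan is to pass to the initial ideal and apply \Cref{thrm:9}. By \Cref{rem:grob}, $\ini_{\prec}(J_{W(G)})$ is a squarefree monomial ideal. So by the Conca--Varbaro theorem on squarefree Gr\"obner degenerations, $\reg(R/J_{W(G)})=\reg(R/\ini_{\prec}(J_{W(G)}))$. (Alternatively, the general inequality $\reg(R/J)\le\reg(R/\ini(J))$ is not enough here, so the equality is genuinely needed.) Let $\mathcal{H}$ be the hypergraph whose edges are the supports of the minimal generators listed in \Cref{rem:grob}. It then suffices to exhibit an induced matching $e_1,\ldots,e_r$ in $\mathcal{H}$ with $\sum_i(|e_i|-1)=p+1$.

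Since $G$ is non-empty, fix an edge $\{a,b\}\in E(G)$. Let $a',b'$ be the whisker vertices at $a,b$, and for every other vertex $c\in V(G)\setminus\{a,b\}$ let $c'$ be its whisker. The crucial point is the labeling, which I would choose in the following order:
\begin{itemize}
\item first $a'$, then $b'$;
\item next all vertices $c\in V(G)\setminus\{a,b\}$, in any order;
\item then $a$, then $b$;
\item finally all leaves $c'$.
\end{itemize}
Thus $a'<b'<c<a<b<c''$ for all such $c$ and leaves $c''$, and in particular $c<c'$.

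The candidate matching consists of the following generators:
\begin{itemize}
\item $x_{a'}x_ax_by_{b'}$, coming from the path $a',a,b,b'$. This path is admissible: it is an induced path, and its interior vertices $a,b$ are larger than $b'$. This edge contributes $3$.
\item $x_cy_{c'}$ for each of the $p-2$ vertices $c$, coming from the whisker edge $\{c,c'\}$. Each contributes $1$.
\end{itemize}
These supports are pairwise disjoint, and the total is $3+(p-2)=p+1$.

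The main step, and the one I expect to be delicate, is inducedness. Let $P=x_{a'}x_ax_by_{b'}\prod_c x_cy_{c'}$. I must show that no minimal generator $u_\pi x_iy_j$ of $\ini_\prec(J_{W(G)})$ other than the chosen ones divides $P$. The only $y$-variables in $P$ are $y_{b'}$ and the $y_{c'}$, all belonging to leaves. Hence in such a generator we have $j\in\{b'\}\cup\{c'\}$, and $u_\pi$ involves only $x$-variables, so every interior vertex of $\pi$ must exceed $j$.

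\begin{itemize}
\item If $j=c'$, the penultimate vertex of $\pi$ is necessarily $c$, since $c'$ is a leaf. But $c<c'$, so $c$ cannot be interior. Hence $\pi$ is the single edge $c,c'$, which is one of the chosen generators.
\item If $j=b'$, we need $i<b'$ with $x_i$ dividing $P$, which forces $i=a'$. The path must then start $a',a$ and end $b,b'$, since both endpoints are leaves. Because $a\sim b$, any longer path through $a$ and $b$ would induce a cycle, which is forbidden by admissibility. So $\pi$ is exactly $a',a,b,b'$.
\end{itemize}

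This is precisely where earlier naive labelings fail. For instance, putting $c<b'$ would create generators like $x_cx_by_{b'}$, and giving some $G$-vertices $y$-variables would create degree-two generators $x_cy_a$ from edges of $G$. The labeling above is chosen to rule out both. Once inducedness is checked, \Cref{thrm:9} gives $\reg(R/\ini_\prec(J_{W(G)}))\ge p+1$, and the Conca--Varbaro equality transfers this bound to $R/J_{W(G)}$.
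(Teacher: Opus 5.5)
Your proposal is correct and takes essentially the same approach as the paper: it passes to the squarefree initial ideal via Conca--Varbaro, then exhibits the induced matching formed by $x_{a'}x_ax_by_{b'}$ (from the path $a',a,b,b'$) and the whisker generators $x_cy_{c'}$, with $a'<b'$ labeled first and every $y$-variable in the matching sitting on a leaf. The differences are cosmetic: the paper orders the remaining base vertices $c$ after $a,b$ rather than before and treats $p=2$ separately via the path-graph formula, while your argument covers $p=2$ directly.
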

\begin{proof}
    Given that $G$ is a non-empty graph on $p$ vertices. If $p=2$, then $G=K_2$ and $W(G)$ is a path graph of length $3$. In this case, we have $\reg(R/J_{W(G)})=3$ by \cite[Theorem 1.1]{mm13}. Thus, we may assume $p>2$. Label the vertices of $G$ from $3$ to $p+2$ such that $\{3,4\}$ is an edge of $G$. Now we will label the vertices of $W(G)$ in a special manner: label the pendant vertex attached to $3$ as $1$, $4$ as $2$, $5$ as $p+3$, $6$ as $p+4$, \ldots, $p+2$ as $2p$. Now, consider the initial ideal $\ini_{\prec}(J_{W(G)})$ of $J_{W(G)}$ with respect to the lexicographic order $\prec$ mentioned in \Cref{rem:grob}. Let $\mathcal{H}$ be the hypergraph such that $I_{\mathcal{H}}=\ini_{\prec}(J_{W(G)})$. Due to our choice of labeling of vertices and the description of minimal generators of $\ini_{\prec}(J_{W(G)})$ given in \Cref{rem:grob}, one can see that
    $\{x_1,y_2,x_3,x_4\},\{x_5,y_{p+3}\},\{x_6,y_{p+4}\},\ldots,\{x_{p+2},y_{2p}\}\in E(\mathcal{H})$. Let us take 
    $$M=\{ \{x_1,y_2,x_3,x_4\},\{x_5,y_{p+3}\},\{x_6,y_{p+4}\},\ldots,\{x_{p+2},y_{2p}\}\}.$$
    \textbf{Claim:} $M$ forms an induced matching in $\mathcal{H}$.\\
    \textit{Proof of the claim.} Observe that for any two distinct edges $e_i,e_j \in M$, we have $e_i\cap e_j=\emptyset$. Therefore, edges in $M$ are pairwise disjoint. Let 
    $$K=\bigcup_{e\in M}e=\{x_1,x_3,x_4,x_5,x_6,\ldots,x_{p+2},y_2,y_{p+3},y_{p+4},\ldots,y_{2p}\}.$$ 
    We have to show that the only edges of $\mathcal{H}$ contained in $K$ are elements of $M$. Suppose there exists $e\in E(\mathcal{H})\setminus M$ such that $e \subset K$. Note that $\{x_{i},y_2\} \notin E(\mathcal{H})$ as the only neighbor of $2$ in $W(G)$ is $4$ and $4>2$. Also, for $1 \leq l \leq p-2$, $\{x_i,y_{p+2+l}\} \notin E(\mathcal{H})$ if $i \neq 4+l$ as the only neighbor of $p+2+l$ is $4+l$. Hence $\vert e \vert > 2$. Now, corresponding to $e$ there exists an admissible path $\pi: i = i_{0},i_{1},\ldots,i_{r} = j $ with $r \geq 2$ in $W(G)$ such that $\supp(u_{\pi}x_iy_j)=e$. Observe that if $j=i_r=2$, then the only possibility is $e=\{x_1,y_2,x_3,x_4\}$, which is in $M$ and gives a contradiction. Therefore, $j=i_r \in \{p+3,p+4,\ldots,2p\}$ as $y_j \in K$. Let $j=i_r=p+2+l$ for some $1 \leq l \leq p-2$. Since $p+2+l$ is a pendant vertex of $W(G)$ adjacent to $4+l$, the only choice for $i_{r-1}$ is $4+l$. Since we assumed $p>2$, it follows from the definition of admissible path that $i_{r-1} =4+l< i$, and so, $y_{4+l} \in e$. This gives a contradiction to the fact that $e \subset K$ as $y_{4+l}\notin K$ for any $1\leq l\leq p-2$. Hence, our assumption is wrong and $M$ forms an induced matching in $\mathcal{H}$.\par 
    Now, by the above claim and \Cref{thrm:9}, we get
    \begin{align*}
        \reg(R/\mathrm{in}_{\prec}(J_{W(G)})) & = \reg(R/I_{\mathcal{H}}) \\
        & \geq \sum_{e \in M}(\vert e \vert -1 ) \\
        & =  2(p-2)+4-(p-1) \\
        & =  p+1
    \end{align*}
Hence, by \cite[Corollary 2.7]{cv20}, $\reg(R/J_{W(G)})=\reg(R/\mathrm{in}_{\prec}(J_{W(G)}))\geq p+1$.
\end{proof}

\begin{theorem}\label{thm:reg-bound-gapfree}
    Let $G$ be a gap-free graph. Then 
    $$\reg(R/J_{W(G)})=\vert V(G)\vert+1.$$
\end{theorem}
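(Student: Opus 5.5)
The proof combines the upper bound of \Cref{thm:reg-bound-gen} with the lower bound of \Cref{reg-lowerbnd}.

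\emph{Upper bound.} Apply \Cref{thm:reg-bound-gen} with $m=2$ and $S=V(G)$, so that $W_S(G)=W(G)$. This gives
$$\reg(R/J_{W(G)})\leq |V(G)|+\im(G).$$
Since $G$ is gap-free, $\im(G)=1$, and hence $\reg(R/J_{W(G)})\leq |V(G)|+1$.

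\emph{Lower bound.} A gap-free graph has $\im(G)=1$ by the paper's definition, so $G$ has at least one edge and is non-empty in the sense of \Cref{reg-lowerbnd}. That lemma then gives $\reg(R/J_{W(G)})\geq |V(G)|+1$.

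\emph{Conclusion and possible subtlety.} The two inequalities together yield the equality. The only point needing care is the convention for graphs without edges. If $G$ were edgeless, then $\im(G)=0$, $W(G)$ would be a disjoint union of $|V(G)|$ copies of $K_2$, and $\reg(R/J_{W(G)})=|V(G)|$, so the formula would fail. Gap-freeness as defined here ($\im(G)=1$) rules this case out. If disconnected gap-free graphs with isolated vertices are allowed, the same argument still applies: $\im(G)=1$ still holds, and the lower bound lemma was proved for any graph containing an edge, since its labeling used only a chosen edge $\{3,4\}$ and pendant vertices. So no obstacle remains beyond invoking the two earlier results.
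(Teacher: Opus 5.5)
Your proof is correct and is exactly the paper's argument: the upper bound $|V(G)|+\im(G)=|V(G)|+1$ from \Cref{thm:reg-bound-gen} (with $m=2$ and $S=V(G)$) combined with the lower bound of \Cref{reg-lowerbnd}. Your side remark on edgeless graphs is also accurate: there $\reg(R/J_{W(G)})=|V(G)|$, and the hypothesis $\im(G)=1$ is what guarantees $G$ has an edge, so the lemma applies.
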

\begin{proof} Since the graph $G$ is gap-free, $\im(G)=1$. Hence, by \Cref{thm:reg-bound-gen} and \Cref{reg-lowerbnd}, we have $\reg(R/J_{W(G)})= \vert V(G)\vert+1$. 
    \end{proof}

\section{Cohen-Macaulay binomial edge ideals of graphs in $\G_2$}
In this section, we classify all graphs having Cohen-Macaulay binomial edge ideals that belong to the class $\G_2$. In particular, it gives a new construction of Cohen-Macaulay binomial edge ideals arising from the generalized corona product of graphs. We begin by recalling some fundamental results concerning the Cohen–Macaulay property of binomial edge ideals.

\begin{lemma}[{\cite[Lemma 2.5]{rr14}}]\label{lem:5} Let $G$ be a connected graph. Then $J_{G}$ is unmixed if and only if $c_G(T)= \vert T \vert +1$ for all $T \in \mathcal{C}(G)$.
\end{lemma}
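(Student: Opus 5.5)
Since this lemma is quoted from \cite{rr14}, I would reprove it by a direct height computation for the minimal primes. The key input is the description of the minimal primes recalled in the preliminaries. For $m=2$ and $R=K[x_{1j},x_{2j}\mid j\in[n]]$, the minimal primes of $J_G$ are exactly the ideals $P_T(K_2,G)$ with $T\in\C(G)$. Unmixedness of $J_G$ means that all these minimal primes have the same height. Because $G$ is connected, $\emptyset\in\C(G)$ and $P_\emptyset(K_2,G)=J_{K_2,K_n}$, so the common value would have to be $\mathrm{ht}\,P_\emptyset$. The plan is therefore to compute $\mathrm{ht}\,P_T(K_2,G)$ for an arbitrary $T$ and compare it with $\mathrm{ht}\,P_\emptyset$.

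First I compute the height of each summand of $P_T$. By \Cref{rem:depth}, $\dim K[x_{ij}\mid i\in[2],j\in V(H)]/J_{K_2,K_{r}}=r+1$ for a complete graph $K_r$ on $r$ vertices. The ambient ring of this quotient has $2r$ variables, and it is Cohen--Macaulay (determinantal), so $J_{K_2,K_r}$ has height $2r-(r+1)=r-1$.

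Next I use the structure of $P_T(K_2,G)$. It is the sum of three kinds of ideals, each living in its own set of variables:
\begin{itemize}
\item the ideal generated by the $2|T|$ variables $x_{1j},x_{2j}$ with $j\in T$;
\item the ideals $J_{K_2,\tilde G_k}$, one for each component $G_1,\ldots,G_{c(T)}$ of $G-T$, where $c(T)=c_G(T)$.
\end{itemize}
These pieces involve pairwise disjoint sets of variables, so their heights add. Writing $n_k=|V(G_k)|$, so that $\sum_k n_k=n-|T|$, I get
\[
\mathrm{ht}\,P_T(K_2,G)=2|T|+\sum_{k=1}^{c(T)}(n_k-1)=n+|T|-c_G(T).
\]
In particular, taking $T=\emptyset$ gives $\mathrm{ht}\,P_\emptyset=n-1$, since $G$ is connected.

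Finally, $J_G$ is radical with minimal primes precisely $\{P_T\mid T\in\C(G)\}$, so $J_G$ is unmixed if and only if $n+|T|-c_G(T)=n-1$ for every $T\in\C(G)$. This is equivalent to $c_G(T)=|T|+1$ for every $T\in\C(G)$, which proves both directions.

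The only delicate point is additivity of height over sums of ideals in disjoint variable sets, together with primality of each summand. I would justify it by noting that the quotient is a tensor product over $K$ of the individual quotients, and dimension adds under such tensor products.
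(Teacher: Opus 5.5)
Your argument is correct and is essentially the standard proof of the cited result; the paper gives no proof of its own, only the reference to \cite{rr14}. That standard proof likewise computes $\mathrm{ht}\,P_T(K_2,G)=n+|T|-c_G(T)$ (the height formula from \cite{hhhkr10}), compares it with $\mathrm{ht}\,P_\emptyset(K_2,G)=n-1$, and uses that $J_G$ is radical, so its associated primes are exactly the $P_T$ with $T\in\C(G)$.

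As a minor remark, neither Cohen--Macaulayness nor primality is needed in your height step. The identity $\mathrm{ht}\,I+\dim(S/I)=\dim S$ holds for every proper ideal $I$ of a polynomial ring $S$ over a field, and dimension is additive under tensor products of finitely generated $K$-algebras.
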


The \textit{cone} on a graph $H$, denoted by $\cone(v,H)$, is a graph with vertex set $V(H)\sqcup \{v\}$ and edge set $E(H)\cup\{\{u,v\}\mid u\in V(H)\}$.

\begin{lemma}[{\cite[Lemma 3.4]{rr14}}] \label{thrm:6} Let $H=\sqcup_{i=1}^{r}H_i$ be a graph with $H_{i}$ connected components with $r \geq 1$ and let $G=\cone(v,H)$. If $J_G$ is unmixed, then $H$ has at most two connected components. 
\end{lemma}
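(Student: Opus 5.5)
The plan is to apply the unmixedness criterion of \Cref{lem:5} to a single, well-chosen cutset, namely $T=\{v\}$, where $v$ is the cone vertex.

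First I record that $G=\cone(v,H)$ is connected: $v$ is adjacent to every vertex of $H$, so any two vertices are joined through $v$. Hence $c(G)=1$, and \Cref{lem:5} applies to $G$. Since $J_G$ is unmixed, it gives $c_G(T)=|T|+1$ for every $T\in\C(G)$.

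Next I split on $r$. If $r=1$ there is nothing to prove. If $r\geq 2$, deleting $v$ leaves $G-v=H=\sqcup_{i=1}^r H_i$, which has $r\geq 2>1=c(G)$ components. So $v$ is a cut vertex of $G=G-(T\setminus\{v\})$, and by definition $T=\{v\}\in\C(G)$. The criterion then forces $r=c_G(\{v\})=|\{v\}|+1=2$. In either case $H$ has at most two connected components.

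There is no real obstacle here. The only point needing care is checking that $\{v\}$ is actually a cutset, which requires $r\geq 2$; this is why the case $r=1$ is handled separately rather than by applying \Cref{lem:5} directly.
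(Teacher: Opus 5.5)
Your proof is correct: $G$ is connected, for $r\ge 2$ the cone vertex alone is a cutset with $c_G(\{v\})=r$, and \Cref{lem:5} then forces $r=2$. The paper does not prove this lemma itself but quotes it from \cite{rr14}, and your single-cutset application of the unmixedness criterion is the standard argument for it.
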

\begin{theorem}[{\cite[Proposition 3.2]{ss24}}]  \label{thrm:7} 
    Let $G$ be a graph and $v \in V(G)$. If $J_G$ is Cohen-Macaulay, then $J_{G_{v}}$ is Cohen-Macaulay.
\end{theorem}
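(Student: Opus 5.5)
We may assume $v$ is a non-free vertex of $G$, since otherwise $G_v=G$ and there is nothing to prove. We may also assume $G$ connected, because Cohen–Macaulayness and the operation $G\mapsto G_v$ both respect disjoint unions. The short exact sequence (\ref{shrt-exact-1}) with \Cref{lem:2} does not seem to suffice: it would require $\depth(R_v/J_{G_v-v})\ge |V(G)|+1$, which fails for dimension reasons. I would therefore compare $J_G$ and $J_{G_v}$ through their minimal primes and a localization, keeping the grading throughout.

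\textbf{Step 1: the prime decompositions.} I will show that $\mathcal C(G_v)=\{T\in\mathcal C(G)\mid v\notin T\}$, and that $P_T(K_2,G_v)=P_T(K_2,G)$ for such $T$. The reason is that $N_G(v)$ becomes a clique in $G_v$, and removing a set $T$ not containing $v$ leaves $G_v-T$ with exactly the connected components of $G-T$, only with additional edges. Consequently
\[
J_{G_v}=\bigcap_{T\in\mathcal C(G),\,v\notin T}P_T(K_2,G),
\]
so $J_{G_v}\supseteq J_G$ is the intersection of exactly those minimal primes of $J_G$ that contain neither $x_v$ nor $y_v$. Since $J_G$ is radical, for any nonzero linear form $\lambda=ax_v+by_v$ we get $J_GR_\lambda=J_{G_v}R_\lambda$. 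Hence $(R/J_G)_\lambda\cong (R/J_{G_v})_\lambda$. Moreover $\lambda$ is a nonzerodivisor on $R/J_{G_v}$, because no associated prime of $J_{G_v}$ contains a nonzero element of $\langle x_v,y_v\rangle_1$.

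\textbf{Step 2: Cohen–Macaulayness away from $V(x_v,y_v)$.} Since $R/J_G$ is Cohen–Macaulay, so is each localization $(R/J_{G_v})_\lambda$. The ideal $J_G$ is stable under the diagonal $GL_2$-action on the pairs $(x_j,y_j)$, and by Step 1 so is $J_{G_v}$. It is also stable under the torus acting vertex by vertex. Hence the non-Cohen–Macaulay locus $Z\subset \mathrm{Spec}(R/J_{G_v})$ is closed and invariant under this group, and by the first part of this step it is contained in $V(x_v,y_v)$.

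\textbf{Step 3: removing the remaining locus (the main obstacle).} It remains to show that $R/J_{G_v}$ is Cohen–Macaulay at primes containing $x_v$ and $y_v$, in particular at the irrelevant ideal. My first attempt is to use the unmixedness of $J_G$ (\Cref{lem:5}) to show that $\dim$ drops: every minimal prime of $J_{G_v}$ has dimension $|V(G)|+1$, so $R/J_{G_v}$ is unmixed. Then I would prove $\depth(R/J_{G_v})\ge |V(G)|+1$. For this, pass to the hyperplane section by $\lambda-1$, which is regular by Step 1; this gives a ring isomorphic to the degree-zero part of $(R/J_G)_\lambda$, and that part is Cohen–Macaulay. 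I would then recover the graded depth using the fact that $\lambda$ is a homogeneous nonzerodivisor of degree one on the positively graded ring $R/J_{G_v}$.

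If this lifting fails, the fallback is a Gröbner argument. I would choose a term order in which $v$ is the largest vertex and compare $\mathrm{in}(J_{G_v})$ with $\mathrm{in}(J_G):x_v^\infty$ via the admissible paths of \Cref{rem:grob}. The aim is to show that $\mathrm{in}(J_{G_v})$ is a colon-type ideal of a Cohen–Macaulay monomial ideal by a variable, whose depth can be controlled, and then conclude by \cite[Corollary 2.7]{cv20}. I expect making either of these two lifting steps rigorous to be the genuine difficulty.
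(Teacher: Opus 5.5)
The paper does not prove this statement; it imports it from \cite{ss24}. Judged on its own terms, your proposal has a genuine gap, and it sits where you suspected: in Step~3.

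\textbf{Step~3 fails as stated.} Steps~1--2 are correct, but they only show that $B=R/J_{G_v}$ is Cohen--Macaulay on $D(x_v)\cup D(y_v)$. For a positively graded ring, Cohen--Macaulayness is decided at the homogeneous maximal ideal $\mathfrak m$, and $\mathfrak m\in V(x_v,y_v)$. So after Step~2 the statement is still entirely open. Your first lifting argument is false in general. Let $\lambda$ be a homogeneous degree-one nonzerodivisor on a standard graded ring $A$. Cohen--Macaulayness of $A/(\lambda-1)A\cong (A_\lambda)_0$ does not imply that $A$ is Cohen--Macaulay, even when $A$ is unmixed. For example, take $A=K[s^4,s^3t,st^3,t^4]$, the coordinate ring of the rational quartic. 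It is a domain, and $\lambda=s^4$ gives $(A_\lambda)_0=K[t/s]$. Yet $\depth A=1<2=\dim A$: we have $t^4(s^3t)^2=s^4(st^3)^2\in s^4A$, while $(s^3t)^2\notin s^4A$ because $s^2t^2\notin A$. Dehomogenizing discards exactly the information at $\mathfrak m$ that you need.

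\textbf{Closing the gap.} The Gr\"obner fallback points the right way, but as written it has none of the needed content. Here is a route that works.
\begin{enumerate}
\item Your Step~1 already gives $J_{G_v}=J_G:x_v$. Indeed, $J_G=J_{G_v}\cap Q$ with $x_v\in Q$, and $x_v$ is regular on $R/J_{G_v}$.
\item You need a term order compatible with this colon, and that is not automatic. In the order of \Cref{rem:grob}, $\mathrm{in}_\prec(J_G):y_n$ contains the variable $x_i$ for every edge $\{i,n\}$.
\item Instead, label $v$ as $n$ and use $x_v$; then $\mathrm{in}_\prec(J_{G_v})=\mathrm{in}_\prec(J_G):x_v$. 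To see this, note that $x_v$ divides $u_\pi x_iy_j$ exactly when $v$ is interior to $\pi$. Such a path passes through $a,v,b$ and can be shortcut along the new edge $\{a,b\}$ of $G_v$.
\item Conversely, an admissible path of $G_v$ never has the free vertex $v$ in its interior. If it uses a new edge $\{a,b\}\subset N_G(v)$, it avoids $v$ and meets $N_G(v)$ only in $a$ and $b$. It therefore lifts to an admissible path of $G$ through $a,v,b$, with $v>j$.
\item Now $\mathrm{in}_\prec(J_G)=I_\Delta$ is squarefree and, by \cite{cv20}, Cohen--Macaulay. Hence $I_\Delta:x_v=I_{\mathrm{st}_\Delta(x_v)}$ is Cohen--Macaulay, since the star is a cone over the link and links of Cohen--Macaulay complexes are Cohen--Macaulay (Reisner). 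Applying \cite{cv20} again shows that $J_{G_v}$ is Cohen--Macaulay.
\end{enumerate}
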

\begin{theorem} [{\cite[Theorem 4.8]{bms22}}] \label{thrm:8}
    Let $H = H_1 \sqcup H_2$ with $H_1, H_2$ connected and $G= \cone(v,H)$. Then $J_{H_{1}}$ and $J_{H_{2}}$ are Cohen-Macaulay if and only if $J_G$ is Cohen-Macaulay.
\end{theorem}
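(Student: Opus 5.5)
The plan is to run the short exact sequence (\ref{shrt-exact-1}) with $m=2$ at the cone vertex $v$, and compare depths with the dimension $n+2$, where $n=|V(H)|=n_1+n_2$ and $n_i=|V(H_i)|$.

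\textbf{Setup.} Since $H=H_1\sqcup H_2$ has two components, $v$ is a non-free vertex of $G=\cone(v,H)$. Its neighbourhood is all of $V(H)$, so
\[
G_v=K_{n+1},\qquad G-v=H_1\sqcup H_2,\qquad G_v-v=K_n .
\]
Thus (\ref{shrt-exact-1}) becomes
\[
0\to R/J_G\to R/J_{K_{n+1}}\oplus R_v/J_{H_1\sqcup H_2}\to R_v/J_{K_n}\to 0 .
\]
By \Cref{rem:depth}, $\depth R/J_{K_{n+1}}=n+2$ and $\depth R_v/J_{K_n}=n+1$. Moreover $\depth R_v/J_{H}=\depth J_{H_1}+\depth J_{H_2}$, where for brevity $\depth J_{H_i}$ means $\depth R_i/J_{H_i}$. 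By \Cref{thm: depth gen upper bound}, this sum is at most $(n_1+1)+(n_2+1)=n+2$, with equality iff each $\depth R_i/J_{H_i}=n_i+1$. Finally, $\dim R/J_G\geq n+2$, since the primes $P_\emptyset$ and $P_{\{v\}}$ both have dimension $n+2$ (using $|V|-|T|+c(T)$).

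\textbf{($\Leftarrow$).} Assume $J_{H_1},J_{H_2}$ are Cohen--Macaulay. Then the middle module has depth $n+2$. By \Cref{lem:2}(2), $\depth R/J_G\geq\min\{n+2,(n+1)+1\}=n+2$. For the reverse inequality, \Cref{lem:6} gives $\dim R/J_G\le \max\{\dim R/J_{K_{n+1}},\dim R_v/J_H\}=n+2$. Hence depth equals dimension and $J_G$ is Cohen--Macaulay.

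\textbf{($\Rightarrow$).} Assume $J_G$ is Cohen--Macaulay, so $J_G$ is unmixed and $\depth R/J_G=n+2$.

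First, each $J_{H_i}$ is unmixed. For $T\in\C(H_i)$, the set $T\cup\{v\}$ is a cutset of $G$. Applying \Cref{lem:5} to $G$ at $T\cup\{v\}$ gives $c_G(T\cup\{v\})=|T|+2$. Since $G-(T\cup\{v\})$ is $(H_i-T)$ together with the other component $H_j$, this yields $c_{H_i}(T)=|T|+1$. By \Cref{lem:5} again, $\dim R_i/J_{H_i}=n_i+1$.

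It therefore suffices to show $\depth R_v/J_H\geq n+2$. The long exact sequence of local cohomology already gives $H^i_{\mathfrak m}(R_v/J_H)=0$ for $i\le n$, because the outer terms vanish there. In degree $n+1$ we get the exact piece
\[
0\to H^{n+1}(R/J_{K_{n+1}})\oplus H^{n+1}(R_v/J_H)\xrightarrow{\ \varphi\ } H^{n+1}(R_v/J_{K_n})\to H^{n+2}(R/J_G)\to\cdots .
\]
Here $H^{n+1}(R/J_{K_{n+1}})=0$, so the task is to show $\varphi$ is zero on the summand $H^{n+1}(R_v/J_H)$; exactness then forces that summand to vanish.

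\textbf{Main obstacle.} Proving that $\varphi$ vanishes on $H^{n+1}(R_v/J_H)$ is the step I expect to be hardest. I would first try a graded argument. The map $R_v/J_H\to R_v/J_{K_n}$ factors through restriction to $P_\emptyset(H)=J_{K_n}$, the minimal prime of dimension $n+1$ at the empty cutset. I would compare the $a$-invariant and the graded pieces of the top local cohomology of the determinantal ring $R_v/J_{K_n}$ with those of $H^{n+1}(R_v/J_H)$. The aim is to show that every nonzero element of $H^{n+1}(R_v/J_H)$ maps into degrees where $H^{n+1}(R_v/J_{K_n})$ vanishes.

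If that comparison fails, the fallback is to argue one component at a time. I would localize at (equivalently, invert) a suitable variable $x_w$ with $w\in V(H_j)$, which turns $H_j$ into an effectively complete piece. The cone over $H_i$ plus a clique then reduces the claim to the single-component case. There I would use \Cref{thrm:7} to pass to $G_w$ and induct on the number of non-free vertices (\Cref{lem:non-free}).
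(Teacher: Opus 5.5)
Your ($\Leftarrow$) direction, the unmixedness of $J_{H_1},J_{H_2}$, and the local-cohomology reduction are all correct. The reduction also correctly isolates the only hard case, $\depth R_v/J_H=n+1=\depth R_v/J_{K_n}$, where \Cref{lem:2} gives no information. But the step that carries the whole content of ($\Rightarrow$), namely that $\varphi$ vanishes on $H^{n+1}_{\mathfrak m}(R_v/J_H)$, is not proved, and the routes you sketch do not work as stated.

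The factorization you mention goes through the wrong ideal. Since $H$ is disconnected, $P_\emptyset(H)=J_{\tilde H_1}+J_{\tilde H_2}$, a prime of dimension $(n_1+1)+(n_2+1)=n+2$. The ideal $J_{K_n}$ strictly contains it and is not a minimal prime of $J_H$.

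A degree comparison cannot succeed either. $H^{n+1}_{\mathfrak m}(R_v/J_{K_n})$ is the graded dual of the canonical module of a Cohen--Macaulay domain, so it is nonzero in every degree $\le a(R_v/J_{K_n})$. A nonzero $H^{n+1}_{\mathfrak m}(R_v/J_H)$ is, by the K\"unneth decomposition, a sum of tensor products each involving a top local cohomology module $H^{n_i+1}_{\mathfrak m}(R_i/J_{H_i})$. It is therefore also nonzero in all sufficiently negative degrees, so no degree bookkeeping can force a degree-preserving map between them to vanish.

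The fallback is not viable either. Inverting $x_w$ only gives $(J_G)_{x_w}=(J_{G_w})_{x_w}$, which completes $N_G(w)$ rather than $H_j$. Moreover, \Cref{thrm:7} transports Cohen--Macaulayness only from $G$ to $G_w$, so an induction on non-free vertices would at best give it for $J_{(H_i)_w}$, not for $J_{H_i}$.

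The missing idea is the correct version of your own factorization. Since $J_H\subseteq J_{\tilde H_1}+J_{\tilde H_2}\subseteq J_{K_n}$, the surjection $R_v/J_H\to R_v/J_{K_n}$ factors through $R_v/(J_{\tilde H_1}+J_{\tilde H_2})\cong R_1/J_{\tilde H_1}\otimes_K R_2/J_{\tilde H_2}$. This is a tensor product of determinantal rings, hence Cohen--Macaulay of dimension $n+2$, so its $H^{n+1}_{\mathfrak m}$ is zero. By functoriality, $\varphi$ vanishes on the summand $H^{n+1}_{\mathfrak m}(R_v/J_H)$. Your injectivity observation then gives $H^{n+1}_{\mathfrak m}(R_v/J_H)=0$, so $\depth R_v/J_H\ge n+2$. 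By \Cref{thm: depth gen upper bound} this forces $\depth R_i/J_{H_i}=n_i+1$ for $i=1,2$. Together with $\dim R_i/J_{H_i}=n_i+1$ from your unmixedness step, both $J_{H_i}$ are Cohen--Macaulay. For comparison, the paper itself gives no argument here; it simply quotes \cite[Theorem 4.8]{bms22}.
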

Before classifying the graphs with Cohen-Macaulay binomial edge ideals in the class $\G_2$, we have to establish a lemma regarding the Krull dimension of binomial edge ideals of graphs belong to a subclass of $\G_2$. Let $\G_2'$ be a subclass of $\G_2$ containing the graphs of the form $D=K_{p}\circ_{S}(H_1,\ldots,H_{|S|})$ for some $p$. The following lemma gives the formula of $\dim(R/J_{D})$ for any $D\in \G_2'$.

\begin{lemma}\label{lem:dim}
    Let $D=K_{p}\circ_{S}(H_1,\ldots,H_{|S|})$ be any graph belong to the class $\G_2'$. Then 
    $$\dim(R/J_D)=p-|S|+1+\sum_{i=1}^{|S|}\dim(R_{i}/J_{H_i}).$$
\end{lemma}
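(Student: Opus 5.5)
We prove the formula by induction on $\ell=|S|$, using the decomposition of \Cref{thm:decomp} for $m=2$ together with \Cref{lem:6}. If $\ell=0$, then $D=K_p$ and $\dim(R/J_{K_p})=p+1$ by \Cref{rem:depth}, which is the claimed value. Suppose $\ell\geq 1$ and put $v=v_\ell\in S$. If $H_\ell$ is empty we may regard $v$ as not belonging to $S$. Otherwise $v$ is a non-free vertex of $D$: it has a neighbour in $K_p$ (assuming $p\ge 2$) and a neighbour in $H_\ell$, and these are non-adjacent. The case $p=1$, $\ell=1$ has to be handled separately. There $D=\cone(v,H_1)$, and the formula reads $\dim = 1+\dim(R_1/J_{H_1})$. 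This can be checked directly: the cutset $\{v\}$ gives the prime $P_{\{v\}}$ of dimension $\dim(R_1/J_{H_1})$ when $H_1$ is connected, and this must be compared with $P_\emptyset$, which has dimension $|V(H_1)|+2$. So in this case the formula needs $\dim(R_1/J_{H_1})\ge |V(H_1)|+1$, which holds since $\dim \ge$ depth bound.

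For the inductive step we use the exact sequence (\ref{shrt-exact-1}). Since $J_D=J_{D_v}\cap\langle x_v,y_v,J_{D-v}\rangle$, we have
\[
\dim(R/J_D)=\max\{\dim(R/J_{D_v}),\ \dim(R_v/J_{D-v})\},
\]
because the minimal primes of an intersection are among those of the two pieces (this also follows from \Cref{lem:6}). Now compute both terms.

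\emph{The term $D-v$.} We have $D-v=H_\ell\sqcup D'$ with $D'=K_{p-1}\circ_{S\setminus\{v\}}(H_1,\dots,H_{\ell-1})\in\G_2'$. By additivity and induction,
\[
\dim(R_v/J_{D-v})=\dim(R_\ell/J_{H_\ell})+(p-1)-(\ell-1)+1+\sum_{i<\ell}\dim(R_i/J_{H_i}),
\]
which is exactly the target value.

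\emph{The term $D_v$.} By the proof of \Cref{lem:compatible class}, $D_v=K_{p+|V(H_\ell)|}\circ_{S\setminus\{v\}}(H_1,\dots,H_{\ell-1})$, whose base graph is complete, so it again lies in $\G_2'$. Induction gives
\[
\dim(R/J_{D_v})=p+|V(H_\ell)|-\ell+2+\sum_{i<\ell}\dim(R_i/J_{H_i}).
\]
Comparing the two terms, the formula follows once we know $\dim(R_\ell/J_{H_\ell})\ge |V(H_\ell)|+1$. This holds because every component $C$ of $H_\ell$ contributes $\dim(J_C)\ge |V(C)|+1$ via the prime $P_\emptyset$, so summing over the $c(H_\ell)\ge 1$ components gives $\dim\ge |V(H_\ell)|+c(H_\ell)$.

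The main obstacle is justifying the equality $\dim(R/J_D)=\max\{\dots\}$ and the membership claims in the degenerate situations: $p=1$, $v$ free (for instance $H_\ell=K_1$ with $p=1$), or $G=K_p$ with $S=V(G)$. In these cases we fall back on the decomposition into minimal primes $P_T$ and compare dimensions directly.
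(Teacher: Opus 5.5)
Your skeleton matches the paper's: induction on $|S|$, the splitting $D-v=H_\ell\sqcup D'$, the graph $D_v$ with complete base graph $K_{p+|V(H_\ell)|}$, and the inequality $\dim(R_\ell/J_{H_\ell})\ge |V(H_\ell)|+1$. Replacing the exact sequence and \Cref{lem:6} by $\dim R/(A\cap B)=\max\{\dim R/A,\dim R/B\}$ is correct, and it removes the need for $D_v-v$.

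\textbf{The gap: the case $p=1$, $|S|=1$.} Your induction genuinely uses this case. $D'$ has base $K_{p-1}$ and $|S|-1$ attached graphs, so whenever $S=V(K_p)$ the $D'$-branch of the recursion ends at a cone $\cone(v_j,H_j)$. Your check of that case is wrong. If $H_1$ is connected, $\{v\}$ is not a cutset of $\cone(v,H_1)$ at all. The cutsets are $\emptyset$ and the sets $\{v\}\cup T'$ with $T'\in\mathcal{C}(H_1)$ and $c(H_1-T')\ge 2$. Hence
\begin{align*}
\dim(R/J_{\cone(v,H_1)})&=\max\Bigl\{|V(H_1)|+2,\ \max_{T'}\bigl(|V(H_1)|-|T'|+c(H_1-T')\bigr)\Bigr\}\\
&\le\max\bigl\{|V(H_1)|+2,\ \dim(R_1/J_{H_1})\bigr\}.
\end{align*}
So the claimed value $1+\dim(R_1/J_{H_1})$ is correct if and only if $\dim(R_1/J_{H_1})\le |V(H_1)|+1$. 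You need an upper bound, not the lower bound you cite.

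\textbf{The statement fails without an extra hypothesis.}
\begin{enumerate}
\item If $H_1$ consists of two isolated vertices, $D$ is the path on three vertices. Then $\dim(R/J_D)=4$, but the formula gives $5$.
\item The error propagates through $D'$. Take $D=K_2\circ_{V(K_2)}(K_{1,3},K_{1,3})$, where $\dim(R_i/J_{K_{1,3}})=6$. One gets $\dim(R/J_D)=12$, attained by $T=\{v_1,c_1\}$ or $T=\{v_1,v_2,c_1,c_2\}$ with $c_i$ the star centres. The right-hand side is $2-2+1+6+6=13$.
\end{enumerate}
So the lemma cannot be proved as stated. Your fallback of ``comparing dimensions directly'' when $S=V(G)$ cannot rescue it.

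\textbf{The paper's proof has the same hole, only hidden.} For $p=1$ it applies the formula to the empty graph $D'$, which then contributes $1$ instead of $0$.

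\textbf{What the argument does prove.} It gives the formula under the additional hypothesis that $S\neq V(K_p)$, or that some $H_j$ satisfies $\dim(R_j/J_{H_j})=|V(H_j)|+1$.
\begin{enumerate}
\item Settle $p=1$ by the cone computation above.
\item For $p\ge 2$, take $v\in S$, choosing $v\neq v_j$ when $S=V(K_p)$. Then $D'$ and $D_v$ again satisfy the hypothesis; note that $S\setminus\{v\}$ is never the whole base $K_{p+|V(H_\ell)|}$ of $D_v$.
\end{enumerate}
This corrected version covers the way the lemma is used in \Cref{thrm:cohen-macly}. There every $H_i$ is connected with Cohen--Macaulay $J_{H_i}$, hence $\dim(R_i/J_{H_i})=|V(H_i)|+1$. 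That corrected statement is what you should aim to prove.
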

\begin{proof}
    We proceed by induction on $|S|$. If $|S|=0$, then $D$ is a complete graph on $p$ vertices, and thus, $\dim(R/J_D)=p+1$. Let us consider $|S|\geq 1$ and $S=\{v_1,\ldots,v_{\ell}\}$ such that $v_i$ is adjacent to all vertices of $H_i$ for all $1\leq i\leq \ell$. For simplicity of notation, let us write $v=v_{\ell}$. Then we see that $D-v=H_{\ell}\sqcup D'$ with $D'\in \G_2'$ and $D_v,D_v-v\in\G_2'$. Again note that exactly $(\ell-1)$ graphs $H_1,\ldots,H_{\ell-1}$ are attached with the base graphs in $D'$, $D_v$ and $D_v-v$. Therefore, using the induction hypothesis, we have
    \begin{align*}
    \dim(R_v/J_{D-v}) & = \dim(R_{\ell}/J_{H_{\ell}})+(p-1)-(\ell-1)+1+\sum_{i=1}^{\ell-1}\dim(R_i/J_{H_i})\\
     & = p-\ell+1 + \sum_{i=1}^{\ell}\dim(R_i/J_{H_i})\\
    \dim(R/J_{D_{v}})& = (p+|V(H_{\ell})|)-(\ell-1)+1 + \sum_{i=1}^{\ell-1}\dim(R_i/J_{H_i}) \\
    & \leq \dim(R_v/J_{D-v}) \quad (\text{since } \dim(R_{\ell}/J_{H_{\ell}})\geq |V(H_{\ell})|+1)\\
    \dim(R_v/J_{D_{v}-v)}) & = (p+|V(H_{\ell})|-1)-(\ell-1)+1 + \sum_{i=1}^{\ell-1}\dim(R_i/J_{H_i})\\
    & < \dim(R_v/J_{D-v}) \quad (\text{since } \dim(R_{\ell}/J_{H_{\ell}})\geq |V(H_{\ell})|+1).
\end{align*}
Now, using the short exact sequence (\ref{shrt-exact-1}), \Cref{lem:6}, and the above inequalities, we get
\begin{align*}
    \dim(R_v/J_{D-v} \oplus R/J_{D_v}) & = \mathrm{max}\{\dim(R/J_D), \dim(R_v/J_{D_{v}-v})\} \\
  \implies\hspace{0.5cm} \dim(R_v/J_{D-v})  & = \dim(R/J_D)
\end{align*}
Hence, we have $\dim(R/J_D)=p-\ell+1+\sum_{i=1}^{\ell}\dim(R_i/J_{H_i})$.
\end{proof}

\begin{theorem}\label{thrm:cohen-macly}
 Let $D=G\circ_{S}(H_1,\ldots,H_{\vert S\vert})$ be any connected graph in $\G_2$ with $G$ non-empty. Then the following are equivalent:
    \begin{enumerate}
        \item $J_{D}$ is Cohen-Macaulay;
        \item $G$ is complete, each $H_i$ is connected with Cohen-Macaulay $J_{H_i}$, and whenever $\vert S\vert=\vert V(G)\vert$ at least one of $H_{i}$'s is complete.
        \end{enumerate}
\end{theorem}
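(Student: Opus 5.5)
For (2)$\Rightarrow$(1), I would compare depth with dimension. Assume $G=K_p$ and each $H_i$ is connected with $J_{H_i}$ Cohen-Macaulay. Then $D\in\G'$ with $m=2$, and $D$ is connected, so \Cref{thm:depth-equal} gives $\depth(R/J_D)=|V(D)|+1$. By \Cref{lem:dim} and \Cref{rem:depth},
$$\dim(R/J_D)=p-|S|+1+\sum_i(|V(H_i)|+1)=|V(D)|+1,$$
so the two agree and $J_D$ is Cohen-Macaulay.

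The extra hypothesis must be handled here. If $|S|=p$ and no $H_i$ is complete, take $T=V(G)$. Each $v_i$ is then a cut vertex of $D-(T\setminus\{v_i\})$, because the $H_i$-part separates from the rest, so $T$ is a cutset. Also $c_D(T)=p<|T|+1$, which makes $J_D$ not unmixed by \Cref{lem:5}. This contradicts the dimension count, so I expect the count in \Cref{lem:dim} must secretly fail in this case. Its proof uses $\dim(R_v/J_{D-v})=\dim(R/J_D)$, and that step needs $\dim(R_v/J_{D_v-v})\le\dim(R/J_D)$. On reflection the argument there seems fine; the real issue is that when $|S|=p$ and some $H_i$ is complete, the vertex $v_i$ together with $H_i$ forms a clique in which $v_i$ is effectively free. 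So I would double-check that case against the depth formula rather than rely on it, verifying unmixedness via \Cref{lem:5} directly.

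In that case the cutsets of $D$ are subsets $T\subseteq S$ together with cutsets inside the $H_i$ once $v_i\in T$. Removing $T\cap S$ from a complete base of size $p$ leaves $|T\cap S|$ extra components from the $H_i$. The base remainder survives as one component unless $T\supseteq V(G)$. So $c=|T|+1$ holds except when $T=V(G)=S$, where one gets $p$ components. A complete $H_j$ forbids $v_j$ being a cut vertex there, because $v_j$ is simplicial in $D-(S\setminus\{v_j\})$. This shows why the completeness hypothesis is exactly what is needed, and with it the depth/dimension equality above settles (2)$\Rightarrow$(1).

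For (1)$\Rightarrow$(2), I would proceed in the following order.

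\textbf{Connectivity of $H_i$.} Each $v_i\in S$ is a cut vertex, since $H_i$ separates from the rest. By \Cref{lem:5}, $c_D(\{v_i\})=2$, so $H_i$ is connected.

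\textbf{Cohen-Macaulayness of $J_{H_i}$.} Let $D''$ be $D$ with everything except $v_i$ and $H_i$ collapsed. Repeatedly applying \Cref{thrm:7} to the vertices $u\in S\setminus\{v_i\}$ makes those vertices free while preserving Cohen-Macaulayness. Then the localization at $v_i$ is $\cone(v_i,H_i\sqcup D^*)$, with $D^*$ the other side. Since $D_{v_i}$ merely completes neighbourhoods, \Cref{thrm:8} applies to a suitable cone, giving $J_{H_i}$ Cohen-Macaulay.

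\textbf{Completeness of $G$.} Suppose $G$ is not complete. Then there are nonadjacent $a,b$ in one component of $G$, with a minimal separator consisting of non-free vertices in $S$. Taking $T$ to be a minimal vertex separator of $a,b$ inside $G$ enlarged by whiskers, I would show $c_D(T)\ge |T|+2$. Each vertex of $T$ contributes its $H$-component, and $G-T$ splits into at least two pieces. This violates \Cref{lem:5}.

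\textbf{The condition when $|S|=|V(G)|$.} This follows from the cutset $T=V(G)$ argument above.

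The main obstacle is the Cohen-Macaulayness of $J_{H_i}$, namely realizing $D$ as a cone over two components after applying \Cref{thrm:7}. I would first try contracting via $D_u$ for all $u\in S\setminus\{v_i\}$ and then $D_{v_i}$-type moves, and check that the result is $\cone(v_i,H_i\sqcup X)$ with $X$ connected.
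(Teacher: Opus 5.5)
Your argument for the last condition of (1)$\Rightarrow$(2) has a genuine gap. It rests on the set $T=V(G)$, which you also use in your discussion of (2)$\Rightarrow$(1), and this set is not a cutset of $D$. In $D-(V(G)\setminus\{v_i\})$ the component of $v_i$ is $\cone(v_i,H_i)$, and deleting $v_i$ leaves $H_i$, which is connected (as you proved). So $v_i$ is not a cut vertex there: the ``rest'' that $H_i$ was supposed to separate from has already been deleted. Nothing in your reasoning uses that the $H_i$ are non-complete, and applied verbatim to $W(K_2)=P_4$ it would make $J_{P_4}$ non-unmixed.

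The missing idea, which is what the paper does, is to refine $V(G)$ inside each $H_i$. A connected non-complete $H_i$ has a nonempty cutset $T_i$. Since $J_{H_i}$ is Cohen--Macaulay (your previous step), \Cref{lem:5} gives $c_{H_i}(T_i)=|T_i|+1\geq 2$, and all these components are adjacent to $v_i$. Then $T=V(G)\cup T_1\cup\cdots\cup T_p$ is a cutset with $c_D(T)=\sum_i(|T_i|+1)=|T|$, contradicting unmixedness.

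The same correction fixes your count in (2)$\Rightarrow$(1). When $G=K_p$, a cutset $T$ of $D$ meets $V(G)$ only inside $S$, and meets $V(H_i)$ only if $v_i\in T$, in which case it meets it in a cutset $T_i$ of $H_i$. Hence $c_D(T)=\sum_{v_i\in T}(|T_i|+1)+\varepsilon=|T|+\varepsilon$, with $\varepsilon=1$ unless $V(G)\subseteq T$. The case $V(G)\subseteq T$ is possible for a cutset only if every $T_i\neq\emptyset$, which a complete $H_j$ rules out.

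The tension you sensed in (2)$\Rightarrow$(1) is nevertheless real, but the culprit is the depth formula, not \Cref{lem:dim}. Take $D=K_2\circ_{\{v_1,v_2\}}(P_3,P_3)$. Then $D\in\G'$ and $\dim(R/J_D)=|V(D)|+1=9$, yet $J_D$ is not unmixed by the cutset above, so \Cref{thm:depth-equal} cannot hold for $D$. Its inductive proof breaks at the inequality $c(D')\geq c(D)$ once $G-v$ is empty. This happens for $\cone(v_1,P_3)=K_1\circ_{\{v_1\}}(P_3)$ inside that induction, whose true depth is $4$, not $5$.

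Since that proof never uses the extra hypothesis, you cannot simply invoke it after ``verifying unmixedness''; unmixedness alone does not give Cohen--Macaulayness. The paper's own argument leans on \Cref{thm:depth-equal} at exactly this point. To settle the case $|S|=|V(G)|$, rerun that induction deleting only vertices $v_i$ whose $H_i$ is not complete, and keep a complete $H_j$ to the end, where $\cone(v_j,H_j)$ is itself complete. Then $D'$, $D_v$ and $D_v-v$ stay in the class described in (2), and $D'$ is never empty (when $|S|<|V(G)|$, any order works).

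Finally, prove Cohen--Macaulayness of $J_{H_i}$ only after showing $G$ is complete. Then applying $(\cdot)_u$ for all $u\in S\setminus\{v_i\}$ yields exactly $\cone(v_i,H_i\sqcup K)$ with $K$ a nonempty complete graph, to which \Cref{thrm:8} applies. Do not make a $(\cdot)_{v_i}$ move, which would absorb $H_i$ into a clique. Your connectivity argument via $c_D(\{v_i\})=2$ (simpler than the paper's use of \Cref{thrm:6}) and your minimal-separator argument for completeness of $G$ are fine.
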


\begin{proof} (1) $\implies$ (2): Suppose $J_{D}$ is Cohen-Macaulay. Assume by contradiction, $G$ is not complete. Then there exists a pair of vertices $u$ and $v$ in $G$ such that $u$ and $v$ are not adjacent. Consider the set of vertices $T=N_{G}(u)\setminus A_{G}$. Then $T\subset B_{G}\subset S$, and from the construction of $D$, we have $T\in \mathcal{C}(D)$. Since $D$ is connected, $G$ is also connected, and thus, $u$ and $v$ are connected through some paths in $G$. Observe that neighbors of $u$ in $G\setminus T$ are either free vertices or there is no neighbor of $u$ in $G\setminus T$. Consequently, in $G\setminus T$, $u$ and $v$ belong to two different connected components as there is no edge between $u$ and $v$. Therefore, $c(D\setminus T)\geq |T|+2$ as each vertex of $T$ is a cut vertex in $D$, which is a contradiction to the fact that $J_{D}$ is unmixed by \Cref{lem:5}. Hence, our assumption was wrong, and $G$ is a complete graph. Next, if $|S|=\emptyset$, then $D=G$ is a complete graph and there is nothing to prove. Note that $|V(G)|\geq 2$ as $G$ is non-empty. Now, assume $S=\{v_1,\ldots,v_{\ell}\}$ for some $\ell\geq 1$, and consider the graph $D'=D_{v_{1}\cdots\widehat{v_i}\cdots v_{\ell}}$. Then, by \Cref{thrm:7}, $J_{D'}$ is Cohen-Macaulay. Since $G$ is connected and all the vertices of $V(G)\setminus \{v_i\}$ are free vertices in $D'$, it follows that $v_i$ is adjacent to all other vertices in $D'$. Thus, we can see $D'=\cone(v_i,H)$ where $H= H' \sqcup H_i$ for some complete graph $H'$. Then, $H_i$ is connected by \Cref{thrm:6}, and $J_{H_i}$ is Cohen-Macaulay by \Cref{thrm:8}. Since $v_i$ has been chosen arbitrarily, all $H_i$'s are connected with Cohen-Macaulay $J_{H_i}$. Now, consider the case $\vert S\vert=\vert V(G)\vert = p$ (say). We have to show that at least one of the $H_i$'s is complete. Assume, to the contrary, that none of the $H_i$ is complete. Let us choose $T_i \in \mathcal{C}(H_i)\setminus \{\emptyset\}$ for each $1 \leq i \leq p$, and consider the set $T=V(G) \cup T_1 \cup\cdots\cup T_p$. Pick any $s \in T$. If $s \in T_i$ for some $i$, then $s$ is a cut vertex of the graph $D- (T\setminus\{s\})$ as $s$ is a cut vertex of the graph $H_{i}-(T_i\setminus \{s\})$. Suppose $s \in V(G)$ and the graph $H_s$ is attached to $s$ in $D$ for some $1\leq s\leq p$. Since $H_s$ is connected with unmixed $J_{H_s}$, $s$ is adjacent to $\vert T_s \vert +1$ connected components of $ H_s \setminus T_s$ by \Cref{lem:5}. $T_s$ being a non-empty cutset of $H_s$, we have $|T_s|+1\geq 2$, which implies $s$ is a cut vertex in $D-(T\setminus \{s\})$. Therefore, $T$ is a cutset of $D$. Moreover, we have
\begin{align*}
    c_{D}(T) & = c_{H_1}(T_1)+\cdots+c_{H_p}(T_p) \\
    & = (\vert T_1 \vert +1)+ \cdots+ (\vert T_p \vert +1) \quad (\text{by \Cref{lem:5}})\\
    & = \sum_{i=1}^{p} \vert T_i \vert + p \\
    & = \vert T \vert.
\end{align*}
By \Cref{lem:5}, $c_{D}(T)=|T|$ is a contradiction to the fact that $J_D$ is unmixed. Hence, at least one of the $H_i$'s should be complete whenever $|S|=|V(G)|$.\par

(2) $\implies$ (1): Let us assume $D$ satisfies the condition (2) of the statement of the theorem. We have to prove that $J_{D}$ is Cohen-Macaulay. Since all $H_i$'s are connected with Cohen-Macaulay binomial edge ideals, it follows from our \Cref{thm:depth-equal} that $\depth(R/J_D)= \vert V(D) \vert + 1$. Again, note that $\dim(R_i/J_{H_i})=|V(H_i)|+1$ as each $H_i$ is connected with Cohen-Macaulay binomial edge ideal. Due to the assumption of condition (2), we can easily see that $D$ belongs to the class $\G_2'$ considered in \Cref{lem:dim}. Therefore, using the formula for dimension established in \Cref{lem:dim}, we have $\dim(R/J_D)=|V(D)|+1$. Hence, $J_D$ is Cohen-Macaulay, and this completes the proof.
\end{proof}

\subsection*{Acknowledgements} The third author is partially supported by the Anusandhan National Research Foundation (ANRF), Government of India, under the ARG-MATRICS Grant No. ANRF/ARGM/2025/002203/MTR.

\bibliographystyle{abbrv}
	\bibliography{ref}

\end{document}